\newcommand*{\relrelbarsep}{.386ex}
\newcommand*{\relrelbar}{%
  \mathrel{%
    \mathpalette\@relrelbar\relrelbarsep
  }%
}
\newcommand*{\@relrelbar}[2]{%
  \raise#2\hbox to 0pt{$\m@th#1\relbar$\hss}%
  \lower#2\hbox{$\m@th#1\relbar$}%
}
\providecommand*{\rightrightarrowsfill@}{%
  \arrowfill@\relrelbar\relrelbar\rightrightarrows
}
\providecommand*{\leftleftarrowsfill@}{%
  \arrowfill@\leftleftarrows\relrelbar\relrelbar
}
\providecommand*{\xrightrightarrows}[2][]{%
  \ext@arrow 0359\rightrightarrowsfill@{#1}{#2}%
}
\providecommand*{\xleftleftarrows}[2][]{%
  \ext@arrow 3095\leftleftarrowsfill@{#1}{#2}%
}
\DeclareMathOperator*{\holim}{holim}
\begin{document}

\title{Twisted complexes and simplicial homotopies}
\author{Zhaoting Wei\footnote{Department of Mathematics, Texas A\&M  University-Commerce, Commerce, TX, 75429, 
\href{zhaoting.wei@tamuc.edu}{zhaoting.wei@tamuc.edu}}}

\newcommand{\End}{\text{End}}
\newcommand{\ad}{\text{ad}}
\newcommand{\tr}{\text{tr}}
\newcommand{\bDelta}{\bf{\Delta}}
\newcommand{\Hom}{\text{Hom}}
\newcommand{\D}{\text{D}}
\newcommand{\Tot}{\text{Tot}}
\newcommand{\dgCat}{\text{dgCat}}
\newcommand{\DK}{\text{DK}}
\newcommand{\cat}{\mathcal}
\newcommand{\obj}{\text{obj}}
\newcommand{\id}{\text{id}}
\newcommand{\op}{\text{op}}
\newcommand{\Tw}{\text{Tw}}
\newcommand{\Perf}{\text{Perf}}
\newcommand{\StrPerf}{\text{StrPerf}}
\newcommand{\Cpx}{\text{Cpx}}

\newtheorem{thm}{Theorem}[section]
\newtheorem{lemma}[thm]{Lemma}
\newtheorem{prop}[thm]{Proposition}
\newtheorem{coro}[thm]{Corollary}
\newtheorem{conj}[thm]{Conjecture}
\theoremstyle{definition}\newtheorem{defi}[thm]{Definition}
\theoremstyle{remark}\newtheorem{eg}{Example}[section]
\theoremstyle{remark}\newtheorem{rmk}{Remark}[section]
\theoremstyle{remark}\newtheorem{ctn}{Caution}

\maketitle

\setcounter{section}{-1}

\begin{abstract}
In this paper we consider the dg-category of twisted complexes over simplicial ringed spaces. It is clear that a simplicial map $f: (\mathcal{U},\mathcal{R})\to (\mathcal{V}, \mathcal{S})$ between simplicial ringed spaces induces a dg-functor $f^*: \Tw(\mathcal{V}, \mathcal{S})\to \Tw(\mathcal{U}, \mathcal{R})$ where $\Tw(\mathcal{U}, \mathcal{R})$ denotes the dg-category of twisted complexes on $(\mathcal{U},\mathcal{R})$. In this paper we prove that for simplicial homotopic maps $f$ and $g$, there exists an $A_{\infty}$-natural transformation $\Phi: f^*\Rightarrow g^*$ between induced dg-functors. Moreover the $0$th component of $\Phi$ is an objectwise weak equivalence. If we restrict ourselves to the full dg-subcategory of twisted perfect complexes, then we prove that  $\Phi$ admits an $A_{\infty}$-quasi-inverse when $(\mathcal{U},\mathcal{R})$ satisfies some additional conditions.
\end{abstract}
\section*{}
MSC: 18D20, 18G55, 18G30, 14F05

\section{Introduction}
In the late 1970's Toledo and Tong \cite{toledo1978duality} introduced twisted complexes as a way to get their hands on perfect complexes of sheaves on a complex manifold. Twisted complexes, which consist of locally defined complexes together with higher transition functions, soon play an important role in the study of complex geometry, algebraic geometry, as well as dg-categories and $A_{\infty}$-categories, see \cite{o1981trace}, \cite{o1981hirzebruch}, \cite{o1985grothendieck}, \cite{bondal1990enhanced}, \cite{wei2016twisted}, \cite{block2017explicit}, \cite{tsygan2013microlocal}, and \cite{arkhipov2018homotopy2}.

In particular, in  \cite{block2017explicit} and \cite{arkhipov2018homotopy2} it has been proved that for a simplicial ringed space $ (\mathcal{U},\mathcal{R})$, the dg-category of twisted complexes $\Tw(\mathcal{U}, \mathcal{R})$ (See Definition \ref{defi: twisted complexes on simplicial spaces} below) gives the homotopy limit of the cosimplicial diagram of dg-categories
\begin{equation}\label{equation: second cosimplicial diagram of Cpx of an open cover in the introduction}
\begin{tikzcd}
\Cpx(U_0, \mathcal{R}_0) \arrow[yshift=0.7ex]{r}\arrow[yshift=-0.7ex]{r}& \Cpx(U_1, \mathcal{R}_1) \arrow[yshift=1ex]{r}\arrow{r}\arrow[yshift=-1ex]{r}  &   \Cpx(U_2, \mathcal{R}_2) \cdots
\end{tikzcd}
\end{equation}
where $\Cpx(U_i, \mathcal{R}_i) $ denotes the dg-category of complexes of sheaves of $\mathcal{R}_i$-modules on $U_i$. See Proposition \ref{prop: twisted complex is homotopy limit} below.

\begin{rmk}
The definition of twisted complexes in this paper is slightly different to  twisted complexes introduced in \cite{bondal1990enhanced}. See  Definition \ref{defi: twisted complexes on simplicial spaces} below and \cite[Definition 1]{bondal1990enhanced}.
\end{rmk}

Therefore it is natural to expect that the dg-category $\Tw(\mathcal{U}, \mathcal{R})$ has some kind of homotopy invariance. In particular let $f$ and $g:  (\mathcal{U},\mathcal{R})\to (\mathcal{V}, \mathcal{S})$ be two simplicial maps which are simplicial homotopic, i.e. there exists a simplicial map
$$
H: \mathcal{U}\times I \to \mathcal{V}
$$
such that $f=H\circ \varepsilon_0$ and $g=H\circ \varepsilon_1$, we expect that the induced dg-functors $f^*$ and $g^*: \Tw(\mathcal{V}, \mathcal{S})\to \Tw(\mathcal{U}, \mathcal{R})$ can be identified.

Using $H$ we can construct, for each object $\mathcal{E}\in \Tw(\mathcal{V}, \mathcal{S})$, a degree $0$ morphism
$$
\Phi_0(\mathcal{E}): f^*(\mathcal{E})\to g^*(\mathcal{E}).
$$
In Proposition \ref{prop: homotopic functors induces quasi-isomorphic dg-functors between twisted complexes} we prove that $\Phi_0(\mathcal{E})$ is closed and in addition a weak equivalence for each $\mathcal{E}$.

Unfortunately, for a morphism $\phi: \mathcal{E}\to \mathcal{F}$ we notice that
$$g^*(\phi) \cdot\Phi_{0}(\mathcal{E})-(-1)^{|\phi|} \Phi_{0}(\mathcal{F})\cdot f^*(\phi)\neq 0.$$
Therefore $\Phi_{0}$ does not give a dg-natural transformation from $f^*$ to $g^*$. Nevertheless, in this paper we extend $\Phi_0$ to an $A_{\infty}$-natural transformation $\Phi: f^* \Rightarrow g^*$, see Theorem \ref{thm: simplicial homotopies and A infinity nt} below. In addition, if we restrict to $\Tw_{\Perf}(\mathcal{V}, \mathcal{S})$, the full dg-subcategory of twisted perfect complexes, then we can show that $\Phi$ has an $A_{\infty}$-quasi-inverse.

This paper is organized as follows: in Section \ref{section: twisted complexes} we review the concept of twisted complexes and in Section \ref{section: A infinity nt} we review $A_{\infty}$-natural transformations between dg-functors. In Section \ref{section: Simplicial homotopies and twisted complexes} we first study simplicial homotopies between simplicial maps and then construct the $A_{\infty}$-natural transformation $\Phi$. In Section \ref{section: simplicial homotopy and twisted perfect complexes} we consider twisted perfect complexes and show that in this case the $A_{\infty}$-natural transformation $\Phi$ admits an the $A_{\infty}$-quasi-inverse  if $(\mathcal{U},\mathcal{R})$ satisfies some additional conditions.

\section*{Acknowledgement}
The author wants to thank Julian V.S. Holstein for very helpful discussions.

\section{A review of twisted complexes}\label{section: twisted complexes}
\subsection{A review of simplicial and cosimplicial objects}
Recall that the simplicial category $\Delta$ is the category with objects
$$
[n]=\{0,\ldots,n\} ~\text{ for } n\geq 0
$$
and morphisms order preserving functions between objects.

Let $\mathcal{C}$ be a category. A \emph{simplicial object} $X$ in $\mathcal{C}$ is a contravariant functor
$$
X: \Delta^{\op}\to \mathcal{C}
$$
and a morphism $f: X\to Y$ between two simplicial objects in $\mathcal{C}$ is a natural transformation between contravariant functors. 

More explicitly, a simplicial object $X$ in $\mathcal{C}$ consists of a collection of objects $X_n\in \obj\mathcal{C}$ for $n\geq 0$ and a collection of face morphisms
$$
\partial_i: X_n\to X_{n-1}
$$
and degeneracy morphisms
$$
 s_i: X_n\to X_{n+1}  ~0\leq i\leq n
$$
which satisfy the \emph{simplicial identities}
\begin{equation}\label{eq: simplicial identities}
\begin{split}
\partial_i\partial_j&=\partial_{j-1}\partial_i ~\text{ if } i<j;\\
\partial_i s_j&= s_{j-1}\partial_i ~\text{ if } i<j;\\
\partial_i s_j&= \id ~\text{ if } i=j \text{ or } i=j+1;\\
 \partial_i s_j&= s_j\partial_{i-1}~\text{ if } i>j+1;\\
s_is_j&=s_{j+1}s_i ~\text{ if } i\leq j.
\end{split}
\end{equation}
A morphism $f: X\to Y$ between two simplicial objects consists of a collection of morphisms $f_n: X_n\to Y_n$ for $n\geq 0$ in $\mathcal{C}$ such that $f_n$ is compatible with all $\partial_i$'s and $s_j$'s.

Dually a \emph{cosimplicial object} $X$ in $\mathcal{C}$ is a covariant functor
$$
X: \Delta\to \mathcal{C}
$$
and a morphism $f: X\to Y$ between two cosimplicial objects in $\mathcal{C}$ is a natural transformation between covariant functors. We also have an explicit description of cosimplicial objects and morphisms which is dual to the simplicial case.

\begin{eg}\label{eg: classifying space of an open cover}[Classifying space of an open cover]
Let $X$ be a topological space and let  $\mathcal{U}=\{U_i\}$ be an open cover of $X$. Let $U_{i_0\ldots i_n}$ denote the intersection $U_{i_0}\cap\ldots \cap U_{i_n}$ where repetitions of indices are allowed. Then we get a simplicial space $\mathcal{N}$ where
$$
N_n=\coprod_{i_0,\ldots,i_n}U_{i_0\ldots i_n}.
$$
The face map $\partial_k: N_n\to N_{n-1}$ is induced by the inclusion map
$$
U_{i_0\ldots i_n} \hookrightarrow U_{i_0\ldots \widehat{i_k}\ldots i_n}
$$
and the degeneracy map $s_k: N_n\to N_{n+1}$ is induced by the identity map
$$
U_{i_0\ldots i_n}\overset{=}{\to}U_{i_0\ldots i_ki_k\ldots i_n}
$$
\end{eg}

 \subsection{Notations of bicomplexes and sign conventions}\label{subsection: notation of bicomplexes}
In this section by a ringed space we mean a topological space $X$ together with a sheaf of (not necessarily commutative) rings $\mathcal{R}$ on $X$. Examples include
\begin{itemize}
\item A scheme $X$ with the structure sheaf $\mathcal{O}_X$;
\item A complex manifold $X$ with the  sheaf of analytic functions $\mathcal{O}_X$;
\item A topological space $X$ with the constant sheaf of rings $\underline{\mathbb{C}}$;
\item A scheme $X$ with the sheaf of  rings of differential operators $\mathcal{D}_X$.
\end{itemize}

\begin{rmk}
\cite{toledo1978duality}, \cite{o1981trace} and \cite{o1985grothendieck} focus on the special case that $X$ is a complex manifold and $\mathcal{R}=\mathcal{O}_X$ is the sheaf of holomorphic functions on $X$. In this paper we consider more  general  $(X,\mathcal{R})$.
\end{rmk}

\begin{rmk}
In this section by $\mathcal{R}$-modules we always mean left $\mathcal{R}$-modules, unless it is explicitly pointed out otherwise.
\end{rmk}

A \emph{simplicial ringed space} is a simplicial object in the category of ringed spaces, and a \emph{simplicial map} is a morphism between two simplicial objects  in the category of ringed spaces.

In this section we introduce some notations which are necessary in the definition of twisted complexes.
Let $(\mathcal{U},\mathcal{R})$ be a simplicial ringed space. 
Let $\partial_i: (U_n, \mathcal{R}_n)\to (U_{n-1}, \mathcal{R}_{n-1})$ and $s_i: (U_n, \mathcal{R}_n)\to (U_{n+1}, \mathcal{R}_{n+1})$ be the face and degeneracy maps, respectively. Moreover for $k\geq p$, we define
$
 \rho_{k,p}: (U_k, \mathcal{R}_k)\to (U_p, \mathcal{R}_p)$
to be the front face map, i.e. 
\begin{equation}\label{eq: rho}
\rho_{k,p}:=\partial_{p+1}\circ\partial_{p+2}\circ\ldots\circ\partial_k.
\end{equation} 
Similarly we define $
 \tau_{k,p}: (U_k, \mathcal{R}_k)\to (U_p, \mathcal{R}_p)$
 to be the back face map, i.e. 
\begin{equation}\label{eq: tau}
\tau_{k,p}:=\partial_0\circ\partial_0\circ\ldots\circ\partial_0.
\end{equation}

We have the following identities.

\begin{lemma}\label{lemma: rho tau f}
For $k\geq p\geq r$ we have 
\begin{equation}
\rho_{p,r}\circ \rho_{k,p}=\rho_{k,r},
\end{equation}
\begin{equation}
\tau_{p,r}\circ \tau_{k,p}=\tau_{k,r},
\end{equation}
\begin{equation}
\rho_{p,r}\circ \tau_{k,p}=\tau_{k+r-p,r}\circ \rho_{k,k+r-p}.
\end{equation}
Moreover, for a morphism $f: (X, \mathcal{R})\to (Y,\mathcal{T})$ between simplicial ringed spaces, we have
\begin{equation}
f_p\circ \rho_{k,p}= \rho_{k,p}\circ f_k, ~ f_p\circ \tau_{k,p}= \tau_{k,p}\circ f_k.
\end{equation}
\end{lemma}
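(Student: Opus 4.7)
The plan is to unfold the definitions of $\rho_{k,p}$ and $\tau_{k,p}$ in (\ref{eq: rho}) and (\ref{eq: tau}), and verify each identity by applying the simplicial identities (\ref{eq: simplicial identities}) together with the compatibility of the simplicial map $f$ with face maps. The first two identities are essentially just reparenthesizations of a string of composed $\partial$'s. The third, which is the only one requiring any real calculation, reduces to commuting a block of $\partial_0$'s past a block of higher-indexed $\partial_j$'s.

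For the first identity I would simply write
$$
\rho_{p,r}\circ\rho_{k,p} = (\partial_{r+1}\circ\cdots\circ\partial_p)\circ(\partial_{p+1}\circ\cdots\circ\partial_k) = \partial_{r+1}\circ\cdots\circ\partial_k = \rho_{k,r}.
$$
Similarly, since $\tau_{m,n}$ is just $\partial_0$ iterated $m-n$ times, I get $\tau_{p,r}\circ\tau_{k,p}=(\partial_0)^{p-r}\circ(\partial_0)^{k-p}=(\partial_0)^{k-r}=\tau_{k,r}$.

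The third identity is the main step. I would first extract from the simplicial identity $\partial_i\partial_j=\partial_{j-1}\partial_i$ (with $i<j$) the special case $i=0$, which, after renaming, is equivalent to
$$
\partial_j\circ\partial_0 = \partial_0\circ\partial_{j+1} \qquad \text{for all } j\geq 0.
$$
Iterating this one gets $\partial_j\circ(\partial_0)^m=(\partial_0)^m\circ\partial_{j+m}$ for all $m\geq 0$. Then setting $m=k-p$ and applying this to each factor $\partial_j$ appearing in $\rho_{p,r}=\partial_{r+1}\circ\cdots\circ\partial_p$, moving them one at a time from left to right across $(\partial_0)^{k-p}=\tau_{k,p}$, converts the composition into
$$
(\partial_0)^{k-p}\circ\partial_{(r+1)+(k-p)}\circ\cdots\circ\partial_{p+(k-p)} = \tau_{k+r-p,r}\circ\rho_{k,k+r-p},
$$
which is the desired equality. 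The only potential obstacle is keeping track of the index shifts, but this is a purely mechanical bookkeeping exercise.

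For the final identity, since $f$ is a morphism of simplicial ringed spaces its components satisfy $f_{n-1}\circ\partial_i=\partial_i\circ f_n$ for all admissible $i,n$. Iterating this compatibility over the strings of face maps defining $\rho_{k,p}$ and $\tau_{k,p}$ immediately gives $f_p\circ\rho_{k,p}=\rho_{k,p}\circ f_k$ and $f_p\circ\tau_{k,p}=\tau_{k,p}\circ f_k$. Nothing deeper than formal unwinding is needed here.
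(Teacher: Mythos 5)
Your proposal is correct and follows the same route the paper intends: the paper's proof is just the one-line remark that the lemma follows from the simplicial identities \eqref{eq: simplicial identities}, and your argument is exactly that verification carried out explicitly (concatenation for the first two identities, the commutation rule $\partial_j\circ\partial_0=\partial_0\circ\partial_{j+1}$ iterated for the third, and naturality of $f$ for the last). The index bookkeeping in your third step checks out, since $p+(k-p)=k$ and $(r+1)+(k-p)=k+r-p+1$ match the definition of $\tau_{k+r-p,r}\circ\rho_{k,k+r-p}$.
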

\begin{proof}
It follows from the simplicial identities \eqref{eq: simplicial identities}
\end{proof}

Let $E^{\bullet}$ be a   graded sheaf of $\mathcal{R}_0$-modules on $U_0$. Let
\begin{equation}\label{equation: bigrade sheaves}
C^{\bullet}(\mathcal{U},\mathcal{R}, E^{\bullet})=\prod_{p,q}\Gamma(U_p,\rho_{p,0}^*E^q)
\end{equation}
be the bigraded complexes of $E^{\bullet}$. 

Now if another   graded sheaf $F^{\bullet}$ of $\mathcal{R}_0$-modules is given on  $U_0$, then we can consider the bigraded complex
\begin{equation}\label{equation: map with bigrade between graded sheaves}
C^{\bullet}(\mathcal{U},\mathcal{R}, \text{Hom}^{\bullet}(E,F))=\prod_{p,q} \text{Hom}^q_{\mathcal{R}_p-\text{Mod}}(\tau_{p,0}^*E,\rho_{p,0}^*F).
\end{equation}

\begin{rmk}\label{remark: (p,q) Cech and cohomology degree}
In this paper when we talk about degree $(p,q)$, the first index always indicates the simplicial degree while the second index always indicates the graded sheaf degree. We use $|u|$ to denote  the total degree of $u$.
\end{rmk}

We need to study the compositions of $C^{\bullet}(\mathcal{U},\mathcal{R}, \text{Hom}^{\bullet}(E,F))$. Let $G^{\bullet}$ be a third   graded sheaf of $\mathcal{R}_0$-modules, then there is a composition map
$$
C^{\bullet}(\mathcal{U},\mathcal{R}, \text{Hom}^{\bullet}(F,G)) \times C^{\bullet}(\mathcal{U},\mathcal{R}, \text{Hom}^{\bullet}(E,F))\to C^{\bullet}(\mathcal{U},\mathcal{R}, \text{Hom}^{\bullet}(E,G)).
$$
In fact, for $u^{p,q}\in C^p(\mathcal{U},\mathcal{R}, \text{Hom}^{q}(F,G))$ and $v^{r,s} \in C^r(\mathcal{U},\mathcal{R}, \text{Hom}^s(E,F))$, their composition $(u\cdot v)^{p+r,q+s}$ is given by
\begin{equation}\label{equation: composition of maps between graded sheaves}
(u\cdot v)^{p+r,q+s} =(-1)^{qr}(\rho_{p+r,p}^*u^{p,q})\circ (\tau_{p+r,r}^*v^{r,s})
\end{equation}
where the right hand side is the na\"{i}ve composition of sheaf maps.

In particular $C^{\bullet}(\mathcal{U},\mathcal{R}, \text{Hom}^{\bullet}(E,E))$ becomes an associative algebra under this composition (It is easy but tedious to check the associativity). We also notice that $C^{\bullet}(\mathcal{U},\mathcal{R}, E^{\bullet})$ becomes a left module over this algebra. In fact the action
$$
C^{\bullet}(\mathcal{U},\mathcal{R}, \text{Hom}^{\bullet}(E,E))\times C^{\bullet}(\mathcal{U},\mathcal{R}, E^{\bullet})\to  C^{\bullet}(\mathcal{U},\mathcal{R}, E^{\bullet})
$$
 is given by
\begin{equation}\label{equation: action of maps on sheaves}
(u\cdot c)^{p+r,q+s} =(-1)^{qr}(\rho_{p+r,p}^*u^{p,q})\circ (\tau_{p+r,r}^*c^{r,s})
\end{equation}
where the right hand side is given by evaluation.

\begin{rmk}
The definition of compositions and actions makes sense because we have Lemma \ref{lemma: rho tau f}.
\end{rmk}

There is also  a \v{C}ech-style differential operator $\delta$ on $C^{\bullet}(\mathcal{U},\mathcal{R}, \text{Hom}^{\bullet}(E,F))$ and $C^{\bullet}(\mathcal{U},\mathcal{R}, E^{\bullet})$ of bidegree $(1,0)$ given by the formula
\begin{equation}\label{equation: delta on maps}
(\delta u)^{p+1,q}=\sum_{k=1}^p(-1)^k \partial_k^*u^{p,q}  \,\text{ for } u^{p,q}\in C^p(\mathcal{U},\mathcal{R}, \text{Hom}^q(E,F))
\end{equation}
and
\begin{equation}\label{equation: delta on sheaves}
(\delta c)^{p+1,q} =\sum_{k=1}^{p+1}(-1)^k \partial_k^* c^{p,q}  \,\text{ for }c^{p,q}\in C^p(\mathcal{U}, \mathcal{R},E^q).
\end{equation}

\begin{ctn}
Notice that the map $\delta$ defined above is different from the usual \v{C}ech differential. In Equation \eqref{equation: delta on maps} we do not include the $0$th and the $(p+1)$th indices and in Equation \eqref{equation: delta on sheaves} we do not include the $0$th  index.
\end{ctn}

\begin{prop}\label{prop: Leibniz for Cech differential}
The  differential satisfies the Leibniz rule. More precisely we have
$$
\delta(u\cdot v)=(\delta u)\cdot v+(-1)^{|u|}u\cdot (\delta v)
$$
and
$$
\delta(u\cdot c)=(\delta u)\cdot c+(-1)^{|u|}u\cdot (\delta c)
$$where $|u|$ is  the total degree of $u$.
\end{prop}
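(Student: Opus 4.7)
The plan is to verify both Leibniz identities by direct expansion, starting from the explicit formulas \eqref{equation: composition of maps between graded sheaves}, \eqref{equation: action of maps on sheaves}, \eqref{equation: delta on maps} and \eqref{equation: delta on sheaves}, and rewriting each summand on the left using the simplicial identities \eqref{eq: simplicial identities} together with Lemma \ref{lemma: rho tau f}, so that the sum regroups into the two terms on the right.

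Concretely, for the first identity I would compute
$$
\delta(u\cdot v)^{p+r+1,q+s}=(-1)^{qr}\sum_{k=1}^{p+r}(-1)^k\bigl((\rho_{p+r,p}\circ\partial_k)^*u^{p,q}\bigr)\circ\bigl((\tau_{p+r,r}\circ\partial_k)^*v^{r,s}\bigr)
$$
and split the summation into the two ranges $1\le k\le p$ and $p+1\le k\le p+r$. The combinatorial input is a pair of identities derived by iterated application of the simplicial relation $\partial_i\partial_j=\partial_{j-1}\partial_i$ for $i<j$: in the first range one has $\rho_{p+r,p}\circ\partial_k=\partial_k\circ\rho_{p+r+1,p+1}$ and $\tau_{p+r,r}\circ\partial_k=\tau_{p+r+1,r}$, whereas in the second range, writing $k'=k-p\in\{1,\dots,r\}$, one has $\rho_{p+r,p}\circ\partial_k=\rho_{p+r+1,p}$ and $\tau_{p+r,r}\circ\partial_k=\partial_{k'}\circ\tau_{p+r+1,r+1}$. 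Substituting these in, the first range reassembles immediately into $((\delta u)\cdot v)^{p+r+1,q+s}$, and the second range reassembles, after the change of index $k\mapsto k'$, into $(-1)^p(u\cdot(\delta v))^{p+r+1,q+s}$. The Koszul sign bookkeeping closes up because the extra $(-1)^p$ from the re-indexing combines with the sign shift $(-1)^{q(r+1)-qr}=(-1)^q$ in the composition formula for $u\cdot\delta v$ to produce exactly $(-1)^{p+q}=(-1)^{|u|}$.

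The Leibniz rule on the action by $c\in C^\bullet(\mathcal{U},\mathcal{R},E^\bullet)$ is proved by the same splitting. The only difference is that $\delta c$ includes an extra term at simplicial index $r+1$ that is absent in $\delta u$, which produces an additional summand at $k=p+r+1$ in $\delta(u\cdot c)$. The second-range identities extend to $k=p+r+1$ to give $\rho_{p+r,p}\circ\partial_{p+r+1}=\rho_{p+r+1,p}$ and $\tau_{p+r,r}\circ\partial_{p+r+1}=\partial_{r+1}\circ\tau_{p+r+1,r+1}$, so this boundary summand absorbs precisely into the $k'=r+1$ contribution of $u\cdot(\delta c)$, with the same sign analysis as above.

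The main technical obstacle is establishing the two pairs of simplicial identities cleanly: pushing $\partial_k$ past $\rho_{p+r,p}=\partial_{p+1}\circ\cdots\circ\partial_{p+r}$ in the first range requires iterating the simplicial relation $r$ times (each swap shifts the subsequent index up by one), while collapsing $\partial_0^p\circ\partial_k$ into $\partial_0^{p+1}$ in the $\tau$-computation requires iterating $\partial_0\partial_k=\partial_{k-1}\partial_0$ a total of $k$ times until the index drops to zero. The sign calculation, though delicate, is essentially mechanical once these identities are in place.
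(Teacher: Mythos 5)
Your proposal is correct and carries out in detail exactly the direct verification that the paper dismisses as ``a routine check'': the split of the sum defining $\delta(u\cdot v)$ at $k=p$, the commutation identities for $\partial_k$ past $\rho_{p+r,p}$ and $\tau_{p+r,r}$ in each range, the $(-1)^p$ from re-indexing combining with the $(-1)^{q}$ shift in the composition sign to give $(-1)^{|u|}$, and the extra boundary term $k=p+r+1$ matching the extra term in $\delta c$ are all right. Nothing further is needed.
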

\begin{proof}
This is a routine check.
\end{proof}

Now we consider a ringed space $(X, \mathcal{R})$ and an open cover $\mathcal{U}$ of $X$. The classifying space $\mathcal{N}$ of $\mathcal{U}$ as in Example \ref{eg: classifying space of an open cover} is a simplicial ringed space with structure sheaves inherited from the $\mathcal{R}$ on $X$ and we denote this simplicial ringed space by $(\mathcal{N}, \mathcal{R})$. In this case we have the following observations. Actually they are exactly the conventions in \cite[Section 1]{o1981trace}.
\begin{itemize}
\item An element $c^{p,q}$ of $C^p(\mathcal{N},\mathcal{R}, E^q)$ consists of a section $c^{p,q}_{i_0\ldots i_p}$ of $E^{q}_{i_0}$ over each non-empty intersection $U_{i_0\ldots i_p}$. If $U_{i_0\ldots i_p}=\emptyset$, then the component on it is zero. 
\item An element $u^{p,q}$ of $C^p(\mathcal{N},\mathcal{R},\text{Hom}^q(E,F))$ gives a section $u^{p,q}_{i_0\ldots i_p}$ of $\text{Hom}^q_{\mathcal{R}_p-\text{Mod}}(E^{\bullet}_{i_p},F^{\bullet}_{i_0})$, i.e. a degree $q$ map from $E^{\bullet}_{i_p}$ to $F^{\bullet}_{i_0}$ over the non-empty intersection $U_{i_0\ldots i_p}$. Notice that we require $u^{p,q}$ to be a map from the $F^{\bullet}$ on the last subscript of $U_{i_0\ldots i_p}$ to the $E^{\bullet}$ on the first subscript of $U_{i_0\ldots i_p}$. Again, if $U_{i_0\ldots i_p}=\emptyset$,  then the component on it is zero.
\end{itemize}
The compositions and actions are given in the following formula  (see \cite[Equation (1.1) and Equation (1.2)]{o1981trace}):
$$
(u\cdot v)^{p+r,q+s}_{i_0\ldots i_{p+r}}=(-1)^{qr}u^{p,q}_{i_0\ldots i_p}v^{r,s}_{i_p\ldots i_{p+r}}
$$
and
$$
(u\cdot c)^{p+r,q+s}_{i_0\ldots i_{p+r}}=(-1)^{qr}u^{p,q}_{i_0\ldots i_p}c^{r,s}_{i_p\ldots i_{p+r}}.
$$
Moreover the differentials are given by:
$$
(\delta u)^{p+1,q}_{i_0\ldots i_{p+1}}=\sum_{k=1}^p(-1)^k u^{p,q}_{i_0\ldots \widehat{i_k} \ldots i_{p+1}}|_{U_{i_0\ldots i_{p+1}}} \,\text{ for } u^{p,q}\in C^p(\mathcal{N},\mathcal{R},\text{Hom}^q(E,F))
$$
and
$$
(\delta c)^{p+1,q}_{i_0\ldots i_{p+1}}=\sum_{k=1}^{p+1}(-1)^k c^{p,q}_{i_0\ldots \widehat{i_k} \ldots i_{p+1}}|_{U_{i_0\ldots i_{p+1}}} \,\text{ for }c^{p,q}\in C^p(\mathcal{N},\mathcal{R}, E).
$$

\subsection{Twisted complexes}\label{subsection: twisted complexes}
With the notations in Section \ref{subsection: notation of bicomplexes} we can define twisted complexes on simplicial ringed spaces.
 \begin{defi}\label{defi: twisted complexes on simplicial spaces}
Let $(\mathcal{U},\mathcal{R})$ be a simplicial   ringed space. A twisted complex on  $(\mathcal{U},\mathcal{R})$ consists of a graded sheaf of $\mathcal{R}_0$-modules $E^{\bullet}$ on $U_0$ together with
$$
a=\prod_{k\geq 0} a^{k,1-k}\in \prod_{k\geq 0} \Hom^{1-k}_{\mathcal{R}_k-\text{Mod}}(\tau_{k,0}^*(E),\rho_{k,0}^*(E))
$$
where
$$
a^{k,1-k}\in \Hom^{1-k}_{\mathcal{R}_k-\text{Mod}}(\tau_{k,0}^*(E),\rho_{k,0}^*(E))
$$
and they satisfy the following two conditions.
\begin{enumerate}
\item The Maurer-Cartan equation
\begin{equation}\label{eq: MC for simplicial space}
\delta a+a\cdot a=0,
\end{equation}
or more explicitly
\begin{equation}\label{eq: MC for simplicial space explicit}
\sum_{j=1}^{k-1}(-1)^j\partial_j^*(a^{k-1,2-k})+\sum_{j=0}^k(-1)^{(1-j)(k-j)}\rho_{k,j}^*(a^{j,1-j})\tau_{k,k-j}^*(a^{k-j,1-k+j})=0;
\end{equation}

\item The non-degenerate condition: $a^{1,0}\in \Hom^0_{\mathcal{R}_1-\text{Mod}}(\tau_{1,0}^*(E),\rho_{1,0}^*(E))$ is invertible up to homotopy.
\end{enumerate}

A morphism $\theta$ of degree $m$ from $(E,a)$ to $(F,b)$ is given by a collection
$$
\theta^{k,m-k}\in \Hom^{m-k}_{\mathcal{R}_k-\text{Mod}}(\tau_{k,0}^*(E),\rho_{k,0}^*(F)) \text{ for all } k\geq 0
$$
and the differential is given by
$$
d\theta=\delta\theta+b\cdot \theta-(-1)^m\theta\cdot a
$$
or more explicitly
\begin{equation}
\begin{split}
(d\theta)^{k,m+1-k}&=\sum_{j=1}^{k-1}(-1)^j\partial^*_j\theta^{k-1,m+1-k}\\
+\sum_{l=0}^k&(-1)^{(1-l)(k-l)}\rho_{k,l}^*b^{l,1-l}\tau_{k,k-l}^*\theta^{k-l,m-k+1}+\sum_{l=0}^{k}(-1)^{(m-l)(k-l)}\rho_{k,l}^*\theta^{l,m-l}\tau_{k,k-l}^*a^{k-l,1-k+l}.
\end{split}
\end{equation}

We denote the dg-category of twisted complexes on a simplicial ringed space $(\mathcal{U},\mathcal{R})$ by Tw$(\mathcal{U}, \mathcal{R})$.
 \end{defi}

\begin{rmk}
People who are familiar with $A_{\infty}$-categories may find that the definition of twisted complexes is similar to the construction of $A_{\infty}$-functors. Actually this is the approach taken by \cite{tsygan2013microlocal} and \cite{arkhipov2018homotopy2}. In this paper we satisfy ourselves with Definition \ref{defi: twisted complexes on simplicial spaces} and refer interested readers to \cite[Section 16]{tsygan2013microlocal} and \cite[Section 4]{arkhipov2018homotopy2} for the $A_{\infty}$-approach.
\end{rmk}

\begin{defi}\label{defi: pull back of twisted complexes}
Let $f: (\mathcal{U},\mathcal{R})\to (\mathcal{V}, \mathcal{S})$ be a simlicial map between simplicial ringed spaces. Then $f$ naturally induces a dg-functor $f^*:\Tw(\mathcal{V},\mathcal{S})\to \Tw(\mathcal{U},\mathcal{R})$. More precisely, for $\mathcal{E}=(E,a)\in \Tw(\mathcal{V},\mathcal{S})$, $f^*\mathcal{E}$ is given by $(f_0^*E,f^*a)$ where
$$
(f^*a)^{k,1-k}=f_k^*a^{k,1-k}\in \Hom^{1-k}_{\mathcal{R}_k}(\tau_{k,0}^*(f_0^*E),\rho_{k,0}^*(f_0^*E)).
$$
For a degree $m$ morphism $\phi: \mathcal{E}\to \mathcal{F}$ we define $f^*\phi: f^*\mathcal{E}\to f^*\mathcal{F}$ as
$$
(f^*\phi)^{k,m-k}=f_k^*\phi^{k,m-k}\in \Hom^{m-k}_{\mathcal{R}_k}(\tau_{k,0}^*(f_0^*E),\rho_{k,0}^*(f_0^*F)).
$$
By Lemma \ref{lemma: rho tau f} this definition makes sense. It is clear that $\delta(f^*\phi)=f^*(\delta \phi)$ and $f^*(\phi\cdot\psi)=f^*\phi \cdot f^*\psi$.
\end{defi}

In the case that the simplicial space is the classifying space $\mathcal{N}$ of an open cover $\mathcal{U}$ as in Example \ref{eg: classifying space of an open cover} we have the following more concrete description of twisted complexes.

\begin{defi}[\cite{o1981trace} Definition 1.3 or \cite{wei2016twisted} Definition 5]\label{defi: twisted complex}
Let $(X,\mathcal{R})$ be a ringed  space and $\mathcal{U}=\{U_i\}$ be a locally finite open cover of $X$. A \emph{twisted complex} consists of  graded sheaves $E^{\bullet}_{i}$ of  $\mathcal{R}$-modules on each $U_i$ together with
a collection of morphisms  for $k\geq 0$ and every multi-index $(i_0\ldots i_k)$
 $$
a^{k,1-k}_{i_0\ldots i_k}\in \text{Hom}^{1-k}_{U_{i_0\ldots i_k}}(E_{i_k},E_{i_0}) 
$$
which satisfy the Maurer-Cartan equation
\begin{equation}\label{equation: MC for twisted complex explicit}
\sum_{j=1}^{k-1}(-1)^ja^{k-1,2-k}_{i_0\ldots\widehat{ i_j}\ldots i_k}+ \sum_{l=0}^k (-1)^{(1-l)(k-l)}a^{l,1-l}_{i_0\ldots i_l} a^{k-l,1-k+l}_{i_l\ldots i_k}=0.
\end{equation}

Moreover we impose the following non-degenerate condition: for each $i$, the chain map
\begin{equation}\label{equation: nondegenerate condition of aii}
a^{1,0}_{ii}: (E^{\bullet}_i,  a^{0,1}_i)\to (E^{\bullet}_i,  a^{0,1}_i) \text { is invertible up to homotopy.}
\end{equation}

Morphisms and differentials are defined similarly.
\end{defi}

For more details on twisted complexes see \cite{wei2016twisted}. In this paper we just mention the relation between twisted complexes and homotopy limits. Let
$$\Cpx: \text{Ringed Space}^{op}\rightarrow \dgCat$$
be the contravariant functor which assigns to each  ringed space $(X,\mathcal{R})$ the dg-category of complexes of left $\mathcal{R}$-modules on $X$. This is a presheaf of dg-categories. For a simplicial ringed space   $(\mathcal{U},\mathcal{R})$ we get a cosimplicial diagram of dg-categories
\begin{equation}\label{equation: second cosimplicial diagram of Cpx of an open cover}
\begin{tikzcd}
\Cpx(U_0, \mathcal{R}_0) \arrow[yshift=0.7ex]{r}\arrow[yshift=-0.7ex]{r}& \Cpx(U_1, \mathcal{R}_1) \arrow[yshift=1ex]{r}\arrow{r}\arrow[yshift=-1ex]{r}  &   \Cpx(U_2, \mathcal{R}_2) \cdots
\end{tikzcd}
\end{equation}
Then we have the following result.
\begin{prop}\label{prop: twisted complex is homotopy limit}[\cite[Corollary 4.8]{block2017explicit}, \cite[Proposition 4.0.2]{arkhipov2018homotopy2}]
Let $\mathcal{U}$ be a  simplicial ringed  space . Then
the dg-category of twisted complexes Tw$(\mathcal{U}, \mathcal{R})$ gives an explicit construction of  $\holim \Cpx(\mathcal{U},\mathcal{R})$.
\end{prop}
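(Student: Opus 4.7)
The plan is to compare the explicit twisted complex data with a standard totalization model for the homotopy limit of a cosimplicial dg-category. I would take as my model for $\holim$ a Bousfield--Kan style construction: for a cosimplicial dg-category $X^{\bullet}$ indexed by $\Delta$, an object of the homotopy limit is a Maurer--Cartan element in a suitable total dg-algebra built from the $X^n$'s, and morphisms are given by the twisted $\Hom$-complex attached to two such MC elements. Applied to $X^n = \Cpx(U_n, \mathcal{R}_n)$, this totalization is precisely the bigraded complex $C^{\bullet}(\mathcal{U},\mathcal{R},\Hom^{\bullet}(E,E))$ with differential $\delta$ and product $\cdot$ described in Section \ref{subsection: notation of bicomplexes}.

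First I would pin down the totalization model. Starting from an object $E_0 \in \Cpx(U_0,\mathcal{R}_0)$ (or really a graded sheaf plus a differential), the cosimplicial structure produces at each level $k$ the two pullbacks $\tau_{k,0}^*E_0$ and $\rho_{k,0}^*E_0$; the degree-$1$ elements of $\prod_k \Hom^{1-k}_{\mathcal{R}_k}(\tau_{k,0}^*E_0,\rho_{k,0}^*E_0)$ that satisfy an MC equation with respect to $\delta + \cdot$ are exactly the coherent descent data needed for $E_0$ to assemble into an object of $\holim$. The component $a^{0,1}$ recovers the differential on $E_0$, $a^{1,0}$ is the (homotopy) gluing isomorphism between the two pullbacks to $U_1$, and $a^{k,1-k}$ for $k \geq 2$ are the higher coherences. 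Reading off Definition \ref{defi: twisted complexes on simplicial spaces}, this is precisely a twisted complex, and equation \eqref{eq: MC for simplicial space explicit} is the MC equation in the totalization.

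Second, I would handle morphisms. A degree $m$ morphism $\theta: (E,a) \to (F,b)$ in $\Tw(\mathcal{U},\mathcal{R})$ is by definition a degree $m$ element of $C^{\bullet}(\mathcal{U},\mathcal{R},\Hom^{\bullet}(E,F))$, and the formula $d\theta = \delta\theta + b\cdot\theta - (-1)^m \theta \cdot a$ is the standard twisted differential on the $\Hom$-complex between two MC elements in an associative dg-algebra (the Leibniz rule in Proposition \ref{prop: Leibniz for Cech differential} plus $\delta^2 = 0$ and associativity of the composition \eqref{equation: composition of maps between graded sheaves} guarantee $d^2 = 0$). So the dg-category $\Tw(\mathcal{U},\mathcal{R})$ is exactly the dg-category of MC elements and twisted morphisms in the totalization of \eqref{equation: second cosimplicial diagram of Cpx of an open cover}. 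Finally, the non-degeneracy condition on $a^{1,0}$ corresponds to the descent condition ensuring that the resulting object really lies in the homotopy limit as opposed to in the lax limit.

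The main obstacle will be matching conventions. The $\delta$ defined in \eqref{equation: delta on maps}--\eqref{equation: delta on sheaves} deliberately omits the $\partial_0$ and $\partial_{p+1}$ terms of the naive \v{C}ech differential, and the Koszul signs appear in the product \eqref{equation: composition of maps between graded sheaves}. This is exactly what one obtains after normalizing the Bousfield--Kan cosimplicial totalization: the two external face maps $\partial_0$ and $\partial_{p+1}$ get absorbed into the multiplication $u \cdot v$ through the $\rho^*$ and $\tau^*$ pullbacks, while the inner faces contribute to $\delta$. Once this bookkeeping identification is in place, the proof is a routine check that the MC equation and twisted differential coincide on both sides; for the full calculation I would refer to the arguments in \cite{block2017explicit} and \cite{arkhipov2018homotopy2}.
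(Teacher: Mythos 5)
The paper itself gives no proof of this proposition: it is stated as an imported result, with the entire argument delegated to \cite{block2017explicit} and \cite{arkhipov2018homotopy2}. Your proposal does the same in the end (you defer ``the full calculation'' to those references), but you preface the citation with a conceptual outline, and that outline does accurately describe the strategy of the cited proofs: twisted complexes are the Maurer--Cartan elements of a totalization of the cosimplicial diagram \eqref{equation: second cosimplicial diagram of Cpx of an open cover}, with morphisms given by the twisted $\Hom$-complexes, the outer faces $\partial_0,\partial_{p+1}$ absorbed into the product via $\rho^*$ and $\tau^*$, and the non-degeneracy of $a^{1,0}$ distinguishing the homotopy limit from the lax limit. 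All of that is right.

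The one thing to flag is that your opening move --- ``take as my model for $\holim$ a Bousfield--Kan style construction \ldots\ an object of the homotopy limit is a Maurer--Cartan element in a suitable total dg-algebra'' --- is not an available off-the-shelf model that one can simply invoke and then reduce the proposition to bookkeeping. Establishing that this MC/totalization construction computes the homotopy limit of a cosimplicial diagram in the Dwyer--Kan model structure on dg-categories (which a priori is $\mathrm{Tot}$ of a Reedy fibrant replacement, not of the diagram itself) is precisely the substantive content of \cite[Corollary 4.8]{block2017explicit}; the non-degeneracy condition and the sign and index conventions you call ``the main obstacle'' are symptoms of that fibrancy issue, not independent of it. So read as a self-contained proof your proposal is circular at that step; read as an annotated citation it is fine and matches what the paper actually does.
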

Proposition \ref{prop: twisted complex is homotopy limit} shows the importance of twisted complexes in descent theory, See \cite[Introduction]{arkhipov2018homotopy2} for some discussions and \cite{wei2018descent} for an application.

\begin{rmk}
In practice we are often less interested in the category of all complexes  of $\mathcal{R}$-modules than in some well-behaved subcategory, say complexes with quasi-coherent cohomology on a scheme, or $\mathcal D_{X}$-modules which are quasi-coherent as $\mathcal O_{X}$-modules. As long as the condition we impose is local the theory works equally well in those cases. We will explicitly consider the case of perfect complexes in Section \ref{section: simplicial homotopy and twisted perfect complexes}.
\end{rmk}

 For later purpose we need the following concept. See \cite{wei2016twisted} Definition 2.27.

\begin{defi}\label{defi: weak equivalence between twisted complexes}
Let   $(\mathcal{U},\mathcal{R})$ be a simplicial ringed space. Let $\mathcal{E}=(E^{\bullet},a)$ and $\mathcal{F}=(F^{\bullet},b)$ be two objects in Tw$(\mathcal{U}, \mathcal{R})$. A morphism $\phi: \mathcal{E}\to \mathcal{F}$ is called a \emph{weak equivalence} if it satisfies the following two
conditions:
\begin{itemize}
\item $\phi$ is closed and of degree zero.
\item Its $(0,0)$ component $\phi^{0,0}: (E^{\bullet},a^{0,1})\to (F^{\bullet},b^{0,1})$ is a quasi-isomorphism of complexes of $\mathcal{R}_0$-modules on $U_0$.
\end{itemize}
\end{defi}

\section{$A_{\infty}$-natural transformations}\label{section: A infinity nt}
In this section we review  $A_{\infty}$-natural transformations between dg-functors. For more details see \cite{wei2019recurrent}. See for example \cite{lyubashenko2003category} or \cite{arkhipov2018homotopy2} for an introduction of more general $A_{\infty}$-categories, $A_{\infty}$-functors and $A_{\infty}$-natural transformations. Since we restrict ourselves to $A_{\infty}$-natural transformations between dg-functors, the notations and $\pm$ sign conventions of $A_{\infty}$-natural transformations can be dramatically simplified.

\begin{defi}[$A_{\infty}$-prenatural transformation]\label{def: A_infty-prenatural transformation}
Let  $F$, $G: \mathcal{C}\to \mathcal{D}$ be two dg -functors between dg-categories. An \emph{$A_{\infty}$-prenatural transformation} $\Phi: F\Rightarrow G$ of degree $n$ consists of the following data:
\begin{enumerate}
\item For any object $X\in \obj(\mathcal{C})$, a morphism $\Phi^0_X\in  \mathcal{D}^n(FX,GX)$;
\item For any $l\geq 1$ and any objects $X_0,\ldots, X_l\in \obj(\mathcal{C})$, a morphism 
$$
\Phi^l_{X_0,\ldots, X_l}\in \Hom^{n-l}_k(\mathcal{C}(X_{l-1},X_l)\otimes \ldots \otimes \mathcal{C}(X_0,X_1), \mathcal{D}(FX_0,GX_l))
$$
\end{enumerate}
\end{defi}

\begin{defi}[Differential of $A_{\infty}$-prenatural transformation]\label{def: differential of A_infty-prenatural transformation}
Let   $F$, $G: \mathcal{C}\to \mathcal{D}$ be two dg-functors between dg-categories. Let $\Phi: F\Rightarrow G$ be an $A_{\infty}$-prenatural transformation of degree $n$ as in Definition \ref{def: A_infty-prenatural transformation}. Then the \emph{differential} $d\Phi: F\Rightarrow G$ is an  $A_{\infty}$-prenatural transformation of degree $n+1$ whose components are given as follows:
\begin{enumerate}
\item For any object $X\in \obj(\mathcal{C})$,$(d^{\infty}\Phi)^0_X=d(\Phi^0_X) \in  \mathcal{D}^{n+1}(FX,GX)$;
\item For any $l\geq 1$ and a collection of morphisms $u_i\in  \mathcal{C}(X_{i-1},X_i)$ $i=1,\ldots, l$,
\begin{equation}\label{eq: differential of A infty prenatural transformation}
\begin{split}
&(d^{\infty}\Phi)^l(u_l\otimes\ldots \otimes u_1)=\\
&d(\Phi^l(u_l\otimes\ldots \otimes u_1))+(-1)^{|u_l|-1}G(u_l)\Phi^{l-1}(u_{l-1}\otimes \ldots \otimes u_1)\\
&+(-1)^{n|u_1|-|u_1|-\ldots-|u_l|+l-1}\Phi^{l-1}(u_l\otimes\ldots\otimes u_2)F(u_1)\\
&+\sum_{i=1}^l(-1)^{|u_l|+\ldots+|u_{i+1}|+l-i+1}\Phi^{l}(u_l\otimes\ldots\otimes du_i\otimes \ldots \otimes u_1)\\
&+\sum_{i=1}^{l-1}(-1)^{|u_l|+\ldots+|u_{i+1}|+l-i+1}\Phi^{l-1}(u_l\otimes\ldots\otimes u_{i+1}u_i\otimes \ldots  \otimes u_1)
\end{split}
\end{equation}
\end{enumerate}
\end{defi}

\begin{rmk}
The last term in \eqref{eq: differential of A infty prenatural transformation} exists only if $l\geq 2$.
\end{rmk}

\begin{rmk}
The $d^{\infty}$ above is differed from the $\mu^1_{\mathcal{Q}}$ in \cite[Section I.1d]{seidel2008fukaya} by $(-1)^{n-l+|u_1|+\ldots+|u_l|}$ on each term, which does not infect the properties of $d^{\infty}$.
\end{rmk}

We can check that $d^{\infty}\circ d^{\infty}=0$ on $A_{\infty}$-prenatural transformations.

\begin{defi}[$A_{\infty}$-natural transformation]\label{def: A_infty natural transformation}
Let  $F$, $G: \mathcal{C}\to \mathcal{D}$ be two dg-functors between dg-categories. Let $\Phi: F\Rightarrow G$ be an $A_{\infty}$-prenatural transformation. We call $\Phi$ an \emph{$A_{\infty}$-natural transformation} if $\Phi$ is of degree $0$ and closed under the differential $d^{\infty}$ in Definition \ref{def: differential of A_infty-prenatural transformation}.
\end{defi}

For an $A_{\infty}$-natural transformation $\Phi: F\Rightarrow G$, the $l=0$ component of \eqref{eq: differential of A infty prenatural transformation} is simply $d(\Phi^0_X)=0$ for any object $X$. The $l=1$ condition is that for any $u\in \mathcal{C}(X_0,X_1)$ we have
\begin{equation}\label{eq: l=1 for A-infty natural transformation}
d(\Phi^1(u))-\Phi^1(d(u))+(-1)^{|u|}\Phi^0_{X_1}F(u)+(-1)^{|u|+1}G(u)\Phi^0_{X_0}=0.
\end{equation}
The $l=2$ condition is that for any $u_1\in \mathcal{C}(X_0,X_1)$  and  $u_2\in \mathcal{C}(X_1,X_2)$ we have
\begin{equation}\label{eq: l=2 for A-infty natural transformation}
\begin{split}
d(\Phi^2&(u_2\otimes u_1))-(-1)^{|u_1|+|u_2|}\Phi^1(u_2)F(u_1)-(-1)^{|u_2|}G(u_2)\Phi^1(u_1)\\
&+(-1)^{|u_2|}\Phi^2(u_2\otimes du_1)-\Phi^2(du_2\otimes u_1)+(-1)^{|u_2|}\Phi^1(u_2u_1)=0.
\end{split}
\end{equation}

It is clear that a closed degree $0$ dg-natural transformation $\Phi$ can be considered as an $A_{\infty}$-natural transformation  with $\Phi^l=0$ for all $l\geq 1$.

\begin{defi}[Compositions]\label{def: composition of A-infty natural transformation}
Let $F$, $G$, $H: \mathcal{C}\to \mathcal{D}$ be three dg-functors between dg-categories. Let $\Phi: F\Rightarrow G$  and $\Psi: G\Rightarrow H$ be two $A_{\infty}$-natural transformations. Then the composition $\Psi\circ \Phi$ is defined as follows: For any object $X\in \obj(\mathcal{C})$
$$
(\Psi\circ \Phi)^0_X:=\Psi^0_X\Phi^0_X: FX\to GX\to HX
$$
and for any $u_i\in \mathcal{C}(X_{i-1},X_i)$, $i=1,\ldots, l$
\begin{equation*}
\begin{split}
(\Psi\circ \Phi)^l(u_l\otimes\ldots \otimes u_1):=&\sum_{k=1}^{l-1}\Psi^{l-k}(u_l\otimes\ldots \otimes u_{k+1}) \Phi^k(u_k\otimes\ldots  \otimes u_1)\\
+&\Psi^l(u_l\otimes\ldots \otimes u_1) \Phi^0_{X_0}+\Psi^0_{X_l} \Phi^l(u_l\otimes\ldots  \otimes u_1).
\end{split}
\end{equation*}
We can check that $\Psi\circ \Phi$ is an $A_{\infty}$-natural transformation.
\end{defi}

\begin{rmk}
We can define compositions for general $A_{\infty}$-prenatural transformations. See \cite[Section 3]{lyubashenko2003category} or \cite[Section I.1(d)]{seidel2008fukaya}.
\end{rmk}

\begin{defi}[$A_{\infty}$-quasi-inverse]\label{def: A-infty quasi-inverse}
Let  $F$, $G: \mathcal{C}\to \mathcal{D}$ be two dg $k$-functors between dg-categories. Let $\Phi: F\Rightarrow G$ be an $A_{\infty}$-natural transformation. We call an  $A_{\infty}$-natural transformation $\Psi: G\Rightarrow F$ an \emph{$A_{\infty}$-quasi-inverse} of $\Phi$ if there exist $A_{\infty}$-prenatural transformations $\eta: F\Rightarrow F$ and $\omega: G\Rightarrow G$ both of degree $-1$ such that
$$
\Psi\circ \Phi-\id_F=d^{\infty}\eta, \text{ and } \Phi\circ \Psi-\id_G=d^{\infty}\omega.
$$
In more details, this means that we have
$$
\Psi^0_X\Phi^0_X-\id_{FX}=d\eta^0_X, \text{ and } \Phi^0_X\Psi^0_X-\id_{GX}=d\omega^0_X \text{ for any } X\in \obj{\mathcal{C}}
$$
and for any $l\geq 1$ and any $u_i\in \mathcal{C}(X_{i-1},X_i)$, $i=1,\ldots, l$, we have
\begin{equation}
\begin{split}
&\sum_{k=1}^{l-1}\Psi^{l-k}(u_l\otimes\ldots \otimes u_{k+1}) \Phi^k(u_k\otimes\ldots  \otimes u_1)\\
+&\Psi^l(u_l\otimes\ldots \otimes u_1) \Phi^0_{X_0}+\Psi^0_{X_l} \Phi^l(u_l\otimes\ldots  \otimes u_1)=\\
&d(\eta^l(u_l\otimes\ldots \otimes u_1))+(-1)^{|u_l|-1}G(u_l)\eta^{l-1}(u_{l-1}\otimes \ldots \otimes u_1)\\
&+(-1)^{-|u_2|-\ldots-|u_l|+l-1}\eta^{l-1}(u_l\otimes\ldots\otimes u_2)F(u_1)\\
&+\sum_{i=1}^l(-1)^{|u_l|+\ldots+|u_{i+1}|+l-i+1}\eta^{l}(u_l\otimes\ldots du_i\otimes \ldots u_1)\\
&+\sum_{i=1}^{l-1}(-1)^{|u_l|+\ldots+|u_{i+1}|+l-i+1}\eta^{l-1}(u_l\otimes\ldots u_{i+1}u_i\otimes \ldots u_1)
\end{split}
\end{equation}
and
\begin{equation}
\begin{split}
&\sum_{k=1}^{l-1}\Phi^{l-k}(u_l\otimes\ldots \otimes u_{k+1}) \Psi^k(u_k\otimes\ldots  \otimes u_1)\\
+&\Phi^l(u_l\otimes\ldots \otimes u_1) \Psi^0_{X_0}+\Phi^0_{X_l} \Phi^l(u_l\otimes\ldots  \otimes u_1)\\
&d(\omega^l(u_l\otimes\ldots \otimes u_1))+(-1)^{|u_l|-1}G(u_l)\omega^{l-1}(u_{l-1}\otimes \ldots \otimes u_1)\\
&+(-1)^{-|u_2|-\ldots-|u_l|+l-1}\omega^{l-1}(u_l\otimes\ldots\otimes u_2)F(u_1)\\
&+\sum_{i=1}^l(-1)^{|u_l|+\ldots+|u_{i+1}|+l-i+1}\omega^{l}(u_l\otimes\ldots du_i\otimes \ldots u_1)\\
&+\sum_{i=1}^{l-1}(-1)^{|u_l|+\ldots+|u_{i+1}|+l-i+1}\omega^{l-1}(u_l\otimes\ldots u_{i+1}u_i\otimes \ldots u_1)
\end{split}
\end{equation}
\end{defi}

\begin{prop}\label{prop: A infinity quasi-inverse and objectwise homotopy inverse}
Let  $F$, $G: \mathcal{C}\to \mathcal{D}$ be two dg-functors between dg-categories and $\Phi: F\Rightarrow G$ be an $A_{\infty}$-natural transformation. Then $\Phi$ admits an $A_{\infty}$-quasi-inverse if and only if  $\Phi^0_X:FX\to GX$ is invertible in the homotopy category Ho$\mathcal{D}$ for any object $X\in \mathcal{C}$.
\end{prop}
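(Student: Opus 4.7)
The plan is to prove each direction separately. The ``only if'' direction is essentially formal: if $\Psi$ is an $A_{\infty}$-quasi-inverse of $\Phi$ with witnesses $\eta$ and $\omega$ as in Definition \ref{def: A-infty quasi-inverse}, then the level-$0$ components of the two defining identities read
\[
\Psi^0_X \Phi^0_X - \id_{FX} = d\eta^0_X, \qquad \Phi^0_X \Psi^0_X - \id_{GX} = d\omega^0_X,
\]
so $[\Psi^0_X]$ is a two-sided inverse of $[\Phi^0_X]$ in $\text{Ho}\,\mathcal{D}$.

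For the ``if'' direction, assume that each $\Phi^0_X$ is invertible in $\text{Ho}\,\mathcal{D}$, and construct $\Psi, \eta, \omega$ by induction on the arity $l$. At $l=0$ the hypothesis supplies, for every $X$, a closed degree-$0$ morphism $\Psi^0_X: GX\to FX$ together with degree $-1$ morphisms $\eta^0_X$ and $\omega^0_X$ witnessing the homotopy equivalence. Now assume $\Psi^k, \eta^k, \omega^k$ are constructed for $0\leq k<l$ so that $\Psi$ is an $A_{\infty}$-natural transformation through arity $l-1$ and the identities of Definition \ref{def: A-infty quasi-inverse} hold through arity $l-1$. At arity $l$ there are three conditions to satisfy: the closedness equation $(d^\infty\Psi)^l(\vec u)=S(\vec u)$, together with the two quasi-inverse equations
\[
\Psi^l(\vec u)\,\Phi^0_{X_0} + \Psi^0_{X_l}\,\Phi^l(\vec u) - d\eta^l(\vec u) = R_\eta(\vec u),
\]
\[
\Phi^0_{X_l}\,\Psi^l(\vec u) + \Phi^l(\vec u)\,\Psi^0_{X_0} - d\omega^l(\vec u) = R_\omega(\vec u),
\]
where $S, R_\eta, R_\omega$ are determined entirely by the data at arities $<l$.

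I would first pick a particular solution $\Psi^l$ of the closedness equation (any other solution differs by a $d$-closed morphism) and substitute it into the remaining two equations to obtain residuals which, by a formal computation using $(d^\infty)^2=0$ together with the inductive hypothesis, are $d$-cocycles. The main obstacle is to upgrade ``cocycle'' to ``coboundary'' so that $\eta^l$ and $\omega^l$ can be defined as primitives; this is where the level-$0$ data is used decisively, since right-composition with $\Phi^0_{X_0}$ and left-composition with $\Phi^0_{X_l}$ induce quasi-isomorphisms of the pertinent $\Hom$-complexes, their homotopy inverses being composition with $\Psi^0_{X_0}$ and $\Psi^0_{X_l}$ respectively. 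Adjusting $\Psi^l$ by a suitable $d$-closed term — essentially the first residual right-composed with $\Psi^0_{X_0}$ — simultaneously trivializes both residuals up to coboundary. Verifying that a \emph{single} adjustment works for both equations at once, rather than being forced into two conflicting choices of $\Psi^l$, is the most delicate bookkeeping step, but it follows from the two-sided nature of the homotopy equivalence between $\Phi^0$ and $\Psi^0$ provided at $l=0$.
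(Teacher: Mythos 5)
Your overall architecture is sound, and it is worth noting that the paper itself does not write out a proof of this proposition at all: it defers to \cite[Proposition 7.15]{lyubashenko2003category} and \cite[Theorem 4.1]{wei2019recurrent}, both of which run essentially the arity-by-arity recursion you describe. The ``only if'' direction is exactly the level-$0$ part of Definition \ref{def: A-infty quasi-inverse} and is complete as stated. In the ``if'' direction, however, there is one genuine gap: you propose to ``first pick a particular solution $\Psi^l$ of the closedness equation'' as though its existence were free, and reserve the hypothesis on $\Phi^0$ for trivializing the two residuals afterwards. The closedness condition at arity $l$ has the form $D\Psi^l=S_l$, where $D$ is the differential of the complex $\Hom_k(\mathcal{C}(X_{l-1},X_l)\otimes\cdots\otimes\mathcal{C}(X_0,X_1),\mathcal{D}(GX_0,FX_l))$ and $S_l$ is determined by $\Psi^{<l}$. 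One does get $DS_l=0$ from $(d^\infty)^2=0$, but $S_l$ is \emph{not} automatically a coboundary, so a particular solution need not exist. Already at $l=1$ the obstruction is
\begin{equation*}
S_1(u)=(-1)^{|u|}\bigl(F(u)\Psi^0_{X_0}-\Psi^0_{X_1}G(u)\bigr),
\end{equation*}
i.e.\ the failure of the chosen homotopy inverses $\{\Psi^0_X\}$ to assemble into a dg-natural transformation, and for an arbitrary closed cochain of this shape there is no reason for exactness.

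The fix is the same mechanism you invoke for the residuals, applied one step earlier: right-composing $S_l$ with $\Phi^0_{X_0}$ and using the identities already established at arities $<l$ (closedness of $\Phi$ and of $\Psi^{<l}$, together with the $\eta$- and $\omega$-identities) shows $[S_l\cdot\Phi^0_{X_0}]=0$; since $(-)\circ\Phi^0_{X_0}$ is a quasi-isomorphism of the relevant $\Hom$-complexes, $[S_l]=0$ and a solution $\Psi^l$ exists. (At $l=1$: $S_1(u)\Phi^0_{X_0}$ contains $F(u)\Psi^0_{X_0}\Phi^0_{X_0}\equiv F(u)$ and $\Psi^0_{X_1}G(u)\Phi^0_{X_0}\equiv\Psi^0_{X_1}\Phi^0_{X_1}F(u)\equiv F(u)$ modulo coboundaries, by \eqref{eq: l=1 for A-infty natural transformation} for $\Phi$ and the level-$0$ homotopies, so the class vanishes.) With this inserted, the remainder of your outline goes through: adjusting $\Psi^l$ by the closed correction given by the first residual right-composed with $\Psi^0_{X_0}$ changes that residual by a factor of $\Psi^0_{X_0}\Phi^0_{X_0}-\id=d\eta^0_{X_0}$ and hence makes it exact, and the $\omega$-residual can be handled either by your simultaneous bookkeeping or, more painlessly, by constructing a right inverse and a left inverse separately and identifying them afterwards in the dg-category of pre-natural transformations. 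One last small point: your ``$d\eta^l(\vec u)$'' must be read as the full differential of the $\Hom$-complex, i.e.\ it includes the terms $\pm\,\eta^l(u_l\otimes\cdots\otimes du_i\otimes\cdots\otimes u_1)$ through which the unknown $\eta^l$ also enters \eqref{eq: differential of A infty prenatural transformation}; otherwise the unknown appears on both sides of your equation.
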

\begin{proof}
See \cite[Proposition 7.15]{lyubashenko2003category} or \cite[Theorem 4.1]{wei2019recurrent}.
\end{proof}

\begin{rmk}
Proposition 7.15 in \cite{lyubashenko2003category} is a more general result on  $A_{\infty}$-natural transformation of  $A_{\infty}$-functors between  $A_{\infty}$-categories.
\end{rmk}

\section{Simplicial homotopies}\label{section: Simplicial homotopies and twisted complexes}

\subsection{A review of simplicial homotopies}\label{subsection: review of simplicial homotopy}
First we review the definition of simplicial homotopies between simplicial maps. For more details see  \cite{goerss2009simplicial} Section I.6.
\begin{defi}\label{defi: simplicial homotopy between simplicial spaces}
Let $\mathcal{C}$ be a category which admits finite colimits.  For a simplicial object $\mathcal{U}$ in $\mathcal{C}$, we can construct the tensor product
$\mathcal{U}\times I$ where $I$ is the simplicial set $\Delta_1$. Two simplicial maps $f,g: \mathcal{U}\to \mathcal{V}$ between simplicial objects are called \emph{simplicial homotopic} if there is a map $H: \mathcal{U}\times I\to \mathcal{V}$ such that
$$
f=H\circ\varepsilon_0 \text{ and } g=H\circ\varepsilon_1
$$
where $\varepsilon_{\mu}: \mathcal{U}\to \mathcal{U}\times I$, $\mu=0,1$ are the two obvious inclusions. In this case we call $H$ a simplicial homotopy between $f$ and $g$.
\end{defi}

\begin{rmk}
In the literature a simplicial homotopy is sometimes called a strict simplicial homotopy. In Definition \ref{defi: simplicial homotopy between simplicial spaces} we simply call it simplicial homotopy. Nevertheless we notice that simplicial homotopy is not an equivalence relation if we put no restriction on $\mathcal{V}$. See \cite[Section I.6]{goerss2009simplicial} for further discussions.
\end{rmk}

We have the following equivalent definition of simplicial homotopy, which is useful in the proof of Proposition \ref{prop: homotopic functors induces quasi-isomorphic dg-functors between twisted complexes} below. See \cite{may1992simplicial} Definition 5.1.

\begin{defi}\label{defi: combinatorial simplicial homotopy between simplicial spaces}
Two maps $f,g:  \mathcal{U}\to \mathcal{V}$ between simplicial objects are called \emph{combinatorial simplicial homotopic} if for each $p\geq 0$, there exist morphisms
$$
h_i=h^p_i:U_p\to V_{p+1} \text{ for } i=0,\ldots,p
$$
such that the following conditions hold.
\begin{enumerate}
\item $$\partial_0 h_0=f_p, \partial_{p+1}h_p=g_p;$$
\item
$$
\partial_ih_j=\begin{cases}h_{j-1}\partial_i & i<j\\
\partial_ih_{i-1} &i=j\neq 0\\
h_j\partial_{i-1} & i>j+1
\end{cases};$$
\item
$$
s_ih_j=\begin{cases}h_{j+1}s_i & i\leq j\\
h_js_{i-1} & i>j
\end{cases}.
$$
\end{enumerate}
\end{defi}

\begin{lemma}\label{lemma: two versions of simplicial homotopy are equivalent}
Let $\mathcal{C}$ be a category which admits finite colimits. Then the two versions of simplicial homotopy in Definition \ref{defi: simplicial homotopy between simplicial spaces} and Definition \ref{defi: combinatorial simplicial homotopy between simplicial spaces} are equivalent.
\end{lemma}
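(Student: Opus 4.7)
The plan is to establish a bijection between simplicial homotopies $H : \mathcal{U}\times I \to \mathcal{V}$ in the sense of Definition \ref{defi: simplicial homotopy between simplicial spaces} and families $\{h_i^p : U_p \to V_{p+1}\}$ in the sense of Definition \ref{defi: combinatorial simplicial homotopy between simplicial spaces}. The essential combinatorial input is the shuffle (or prism) decomposition of $\Delta^p \times \Delta^1$: the non-degenerate $(p+1)$-simplices of $\Delta^p \times \Delta^1$ are in bijection with the $p+1$ order-preserving injections $[p+1] \hookrightarrow [p]\times [1]$ that traverse the prism, which I index as $\sigma_0,\ldots,\sigma_p$ according to the position at which the path jumps from the bottom copy $[p]\times\{0\}$ to the top copy $[p]\times\{1\}$.

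First I would pass from $H$ to the $h_i$. Because $\mathcal{C}$ admits finite colimits, $\mathcal{U}\times I$ is computed degreewise as a coproduct of copies of $U_q$ indexed by the $q$-simplices of $\Delta_1$, so a simplicial map $H$ is the same data as a coherent system of morphisms out of these copies. Setting $h_i^p := H$ restricted to $\sigma_i$ gives the desired maps, and the conditions $f = H\circ \varepsilon_0$, $g = H\circ \varepsilon_1$ pick out the two extremal simplices of $\Delta_1$, yielding condition (1). Conditions (2) and (3) then follow by tracking how the face and degeneracy maps of $\Delta_1$ act on the shuffles $\sigma_j$: applying $\partial_i$ to $\sigma_j$ either recovers a lower-dimensional shuffle $\sigma_{j-1}$ or $\sigma_j$ (when $i<j$ or $i>j+1$ respectively), or collapses the prism to a degenerate face (when $i=j\ne 0$ or $i=j+1$), producing exactly the three clauses of condition (2). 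A parallel analysis of $s_i\sigma_j$ produces condition (3).

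Conversely, given $\{h_i^p\}$ satisfying (1)–(3), I assemble a simplicial map $H : \mathcal{U}\times I \to \mathcal{V}$ by using the universal property of the coproduct in each degree to specify $H$ separately on the copy of $U_q$ indexed by each $q$-simplex of $\Delta_1$. The two endpoint simplices are sent via $f_q$ and $g_q$; the non-degenerate "middle" simplices are handled by taking appropriate faces of the $h_i^{q-1}$; the degenerate ones are forced by the requirement that $H$ be simplicial. The compatibility of these choices under all face and degeneracy maps is precisely the content of conditions (1)–(3).

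The main obstacle is purely bookkeeping: enumerating the incidences $\partial_i \sigma_j$ and $s_i\sigma_j$ cleanly and matching them to the three-case structure in Definition \ref{defi: combinatorial simplicial homotopy between simplicial spaces}. Since every construction above is levelwise in $\mathcal{C}$, the cleanest implementation is to first verify the statement for $\mathcal{C} = \mathrm{Set}$ (where $\mathcal{U}\times I$ is just the ordinary product of simplicial sets and the shuffle decomposition is explicit), and then transport the resulting combinatorial identity to the general case. This is the classical argument carried out in \cite[Chapter I]{goerss2009simplicial} and \cite[Section 5]{may1992simplicial}, and I would cite those rather than reprove it in full.
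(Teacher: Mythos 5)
Your proposal is correct and follows essentially the same route as the paper: the paper's proof simply defers to \cite{may1992simplicial}, Proposition 6.2, noting that the argument given there for $\mathcal{C}=\mathrm{Set}$ carries over verbatim to a general $\mathcal{C}$ with finite colimits, since $\mathcal{U}\times I$ is built degreewise from coproducts of copies of $U_p$ indexed by the simplices of $\Delta_1$. Your sketch of the prism/shuffle decomposition is exactly the combinatorial content of that cited proof, and your concluding strategy (verify in $\mathrm{Set}$, then transport, citing May and Goerss--Jardine) matches the paper's treatment.
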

\begin{proof}
It is an easy but complicated combinatorial check. See \cite{may1992simplicial} Proposition 6.2. The proof there is for $\mathcal{C}=Sets$ but it also works for general $\mathcal{C}$.
\end{proof}

\begin{lemma}\label{lemma: f, g and h}
For any $k\geq p$ let $\rho_{k,p}$ and $\tau_{k,p}$ be the front and back face maps as in \eqref{eq: rho} and \eqref{eq: tau}. We have the following identities.
\begin{equation}\label{eq: h and partial}
h_i\partial_j=\begin{cases}
    \partial_{j+1}h_i       & i<j\\
  \partial_j h_{i+1}  & i\geq j;
  \end{cases}
\end{equation}
\begin{equation}\label{eq: h tau}
h_i\circ \tau_{k,p}=\tau_{k+1,p+1}\circ h_{i+k-p},~\forall 0\leq i\leq p;
\end{equation}
\begin{equation}\label{eq: h rho}
h_i\circ \rho_{k,p}=\rho_{k+1,p+1}\circ h_i,~\forall 0\leq i\leq p;
\end{equation}
and
\begin{equation}\label{eq: f tau}
f\circ \tau_{k,p}=\tau_{k+1,p}\circ h_i, ~\forall 0\leq i\leq k-p;
\end{equation}
\begin{equation}\label{eq: g rho}
g\circ \rho_{k,p}=\rho_{k+1,p}\circ h_i, ~\forall p\leq i\leq k.
\end{equation}
\end{lemma}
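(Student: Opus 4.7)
The plan is to prove the five identities in sequence, with the first serving as the main technical tool for the remaining four. Identity \eqref{eq: h and partial} is a direct re-indexing of the second clause of Definition \ref{defi: combinatorial simplicial homotopy between simplicial spaces}: the sub-clause $\partial_i h_j = h_{j-1}\partial_i$ for $i<j$ becomes the $i\geq j$ case of \eqref{eq: h and partial} after relabeling $(i,j)\mapsto(j',i'+1)$, while the sub-clause $\partial_i h_j = h_j\partial_{i-1}$ for $i>j+1$ becomes the $i<j$ case after relabeling $(i,j)\mapsto(j'+1,i')$. The boundary sub-clause $i=j\neq 0$ is not needed here.

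Identities \eqref{eq: h tau} and \eqref{eq: h rho} then follow by iterating \eqref{eq: h and partial}. For \eqref{eq: h tau}, since $\tau_{k,p}=\partial_0^{k-p}$ and every $i\geq 0$, the second case of \eqref{eq: h and partial} gives $h_i\partial_0=\partial_0 h_{i+1}$; applying this $k-p$ times (and checking at each step that the resulting $h$-index remains admissible) produces $h_i\tau_{k,p}=\tau_{k+1,p+1}h_{i+k-p}$. For \eqref{eq: h rho}, the hypothesis $i\leq p$ ensures $i<j$ for every $\partial_j$ appearing in $\rho_{k,p}=\partial_{p+1}\circ\cdots\circ\partial_k$, so the first case of \eqref{eq: h and partial} applies and each $\partial_j$ is shifted to $\partial_{j+1}$ while the $h$-index is preserved, yielding $h_i\rho_{k,p}=\rho_{k+1,p+1}h_i$.

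Identity \eqref{eq: f tau} requires a little more care. Writing $\tau_{k+1,p}=\partial_0^{k+1-p}$, one pushes the face maps through $h_i^k$ from the left by repeatedly applying $\partial_0 h_j=h_{j-1}\partial_0$ (valid for $j\geq 1$ by the first sub-clause of Definition \ref{defi: combinatorial simplicial homotopy between simplicial spaces}(2) with $i=0$). After $i$ such steps we reach $\partial_0^{k+1-p-i}\circ h_0^{k-i}\circ \partial_0^i$; one further $\partial_0$ absorbs $h_0^{k-i}$ into $f_{k-i}$ via Definition \ref{defi: combinatorial simplicial homotopy between simplicial spaces}(1), and the remaining $\partial_0$'s slide past $f$ (because $f$ is a simplicial map) to produce $f_p\circ\partial_0^{k-p}=f_p\circ\tau_{k,p}$. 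The case $i=0$ is handled directly by applying $\partial_0 h_0=f$ at the first step.

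Identity \eqref{eq: g rho} is entirely analogous, using $g_p=\partial_{p+1}\circ h_p^p$ together with $\partial_{j+1}h_i=h_i\partial_j$ for $j>i$ (the third sub-clause of Definition \ref{defi: combinatorial simplicial homotopy between simplicial spaces}(2) after relabeling). For $p\leq i\leq k$ one pushes the face maps of $\rho_{k+1,p}=\partial_{p+1}\circ\cdots\circ\partial_{k+1}$ through $h_i^k$ from the right until the level matches the $h$-index, at which point $\partial_{i+1}h_i^i=g_i$ absorbs the homotopy; the remaining front face operators then commute with $g$ and reassemble as $g_p\circ\rho_{k,p}$ via Lemma \ref{lemma: rho tau f}. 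The only real obstacle throughout is clerical: one must track the simplicial level at which each $h$ and $\partial$ lives to ensure the correct sub-clause of Definition \ref{defi: combinatorial simplicial homotopy between simplicial spaces}(2) is applicable at each step, and record carefully how the $h$-index changes (or not) under each move.
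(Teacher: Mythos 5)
Your proof is correct and is exactly the "routine check of Definition \ref{defi: combinatorial simplicial homotopy between simplicial spaces} and the simplicial identities" that the paper invokes without writing out: the relabelings for \eqref{eq: h and partial}, the iterated pushes of $\partial_0$ and of $\partial_{p+1},\dots,\partial_k$ for \eqref{eq: h tau}--\eqref{eq: h rho}, and the absorption of $h_0$ into $f$ (resp.\ $h_i$ into $g$) for \eqref{eq: f tau}--\eqref{eq: g rho} all check out, including the index bookkeeping. No gaps.
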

\begin{proof}
It is a routine check of Definition \ref{defi: combinatorial simplicial homotopy between simplicial spaces} and the simplicial identities.
\end{proof}

\subsection{Simplicial homotopic maps and twisted complexes}
In the sequel we  compare $f^*$ and $g^*$ for simplicial homotopic maps $f$ and $g$.

\begin{prop}\label{prop: homotopic functors induces quasi-isomorphic dg-functors between twisted complexes}
Let $f$ and $g$ be two simplicial maps between simplicial ringed spaces $(\mathcal{U}, \mathcal{R})$ and  $(\mathcal{V},\mathcal{S})$. Let $h$ be a simplicial homotopy between $f$ and $g$ as in Definition \ref{defi: combinatorial simplicial homotopy between simplicial spaces}. Then for any twisted complex  $\mathcal{E}=(E^{\bullet},a)$ on $(\mathcal{V},\mathcal{S})$, the homotopy $h$ induces a weak equivalence
$$
\Phi_{0}(\mathcal{E}):f^*(\mathcal{E})\overset{\sim}{\to} g^*(\mathcal{E}).
$$
\end{prop}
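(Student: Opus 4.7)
The plan is to construct $\Phi_{0}(\mathcal{E})$ from the homotopy data $\{h^{p}_{i}\}$ by a formula in the spirit of the classical Eilenberg--Zilber homotopy operator, and then verify the two conditions of Definition \ref{defi: weak equivalence between twisted complexes} directly.

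\emph{Step 1: construction.} For each $k\geq 0$ and each $0\leq i\leq k$ the map $h_{i}\colon U_{k}\to V_{k+1}$ satisfies $\tau_{k+1,0}\circ h_{i}=f_{0}\circ\tau_{k,0}$ (by \eqref{eq: f tau} with $p=0$) and $\rho_{k+1,0}\circ h_{i}=g_{0}\circ\rho_{k,0}$ (by \eqref{eq: g rho} with $p=0$). Hence $h_{i}^{*}a^{k+1,-k}$ naturally lies in $\Hom^{-k}_{\mathcal{R}_{k}}(\tau_{k,0}^{*}(f_{0}^{*}E),\rho_{k,0}^{*}(g_{0}^{*}E))$, which is precisely the right bidegree for $\Phi_{0}(\mathcal{E})^{k,-k}$. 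I set
$$
\Phi_{0}(\mathcal{E})^{k,-k}\;=\;\sum_{i=0}^{k}(-1)^{\varepsilon(k,i)}\, h_{i}^{*}a^{k+1,-k},
$$
with alternating signs $\varepsilon(k,i)$ to be fixed by the closedness calculation. In particular $\Phi_{0}(\mathcal{E})^{0,0}=h_{0}^{*}a^{1,0}$.

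\emph{Step 2: closedness.} I unpack $d\Phi_{0}(\mathcal{E})=\delta\Phi_{0}(\mathcal{E})+(g^{*}a)\cdot\Phi_{0}(\mathcal{E})-\Phi_{0}(\mathcal{E})\cdot(f^{*}a)$ and rewrite each piece using Lemma \ref{lemma: f, g and h}. The $\delta$-piece becomes a sum $\sum h_{i'}^{*}\partial_{j'}^{*}a^{k,1-k}$ via identity \eqref{eq: h and partial}, indexed over interior $j'$; the composition pieces, using \eqref{eq: h tau}, \eqref{eq: h rho} to push the $h_{i}$'s past $\rho$ and $\tau$, reassemble as quadratic terms $\rho^{*}a\cdot\tau^{*}a$ pulled back by various $h_{i}$. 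The goal is to collect everything as
$$
\sum_{i=0}^{k+?}(-1)^{?}\, h_{i}^{*}\bigl(\delta a+a\cdot a\bigr)^{k+1,1-k}\;+\;T,
$$
where $T$ is a telescoping sum of $\partial_{0}^{*}$ and $\partial_{k+1}^{*}$ ``boundary'' contributions that cancels in consecutive pairs thanks to the combinatorial identities of Definition \ref{defi: combinatorial simplicial homotopy between simplicial spaces}. The first sum vanishes by the Maurer--Cartan equation \eqref{eq: MC for simplicial space explicit} for $a$, yielding $d\Phi_{0}(\mathcal{E})=0$. The signs $\varepsilon(k,i)$ are fixed by the requirement that the two kinds of cancellation match up; the natural guess is $\varepsilon(k,i)=i$ (or $i+k$), to be confirmed in the course of the calculation.

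\emph{Step 3: weak equivalence.} It remains to see that $\Phi_{0}(\mathcal{E})^{0,0}=h_{0}^{*}a^{1,0}$ is a quasi-isomorphism $(f_{0}^{*}E,f_{0}^{*}a^{0,1})\to(g_{0}^{*}E,g_{0}^{*}a^{0,1})$. The $(1,1)$-component of \eqref{eq: MC for simplicial space explicit} reads $a^{1,0}\cdot\tau_{1,0}^{*}a^{0,1}=\rho_{1,0}^{*}a^{0,1}\cdot a^{1,0}$ on $V_{1}$; pulling back via $h_{0}$ and using $\tau_{1,0}\circ h_{0}=f_{0}$, $\rho_{1,0}\circ h_{0}=g_{0}$ shows that $h_{0}^{*}a^{1,0}$ is a chain map. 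By the non-degeneracy clause (2) in Definition \ref{defi: twisted complexes on simplicial spaces}, $a^{1,0}$ admits a homotopy inverse on $V_{1}$; applying $h_{0}^{*}$ to it yields a homotopy inverse of $\Phi_{0}(\mathcal{E})^{0,0}$, which is therefore a quasi-isomorphism.

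The main obstacle is Step 2: each ingredient (the simplicial identities for $h$ and the MC equation for $a$) is elementary, but assembling them with the correct signs so that the $\delta$-contributions, the two composition contributions, and the Maurer--Cartan quadratic terms cancel simultaneously is the genuinely delicate bookkeeping in the proof.
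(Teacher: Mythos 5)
Your proposal follows essentially the same route as the paper: the same formula $\Phi_0(\mathcal{E})^{k,-k}=\sum_{i=0}^k(-1)^i h_i^*a^{k+1,-k}$ (the correct sign is indeed $\varepsilon(k,i)=i$), the same use of Lemma \ref{lemma: f, g and h} to rewrite $\delta\Phi_0$, $g^*(a)\cdot\Phi_0$ and $\Phi_0\cdot f^*(a)$ as $h_j^*$-pullbacks of the three pieces of the Maurer--Cartan expression, and the same appeal to the non-degeneracy of $a^{1,0}$ for the weak-equivalence claim. The paper packages the bookkeeping you defer into Lemma \ref{lemma: combinatorics of h}, where no leftover boundary term $T$ survives: the two composition identities split the quadratic Maurer--Cartan sum at $i\le j$ versus $i\ge j+1$ and absorb everything.
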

\begin{proof}
For any $k\geq 0$ we have $a^{k+1,-k}\in \Hom^{-k}_{\mathcal{S}_{k+1}}(\tau_{k+1,0}^*(E),\rho_{k+1,0}^*(E))$. Using $h_i: (U_k,\mathcal{R}_k)\to (V_{k+1},\mathcal{S}_{k+1})$ we obtain
$$
h_i^*(a^{k+1,-k})\in \Hom^{-k}_{\mathcal{R}_k}(h_i^*\tau_{k+1,0}^*(E),h_i^*\rho_{k+1,0}^*(E))\text{ for } 0\leq i\leq k.
$$
By \eqref{eq: f tau} and \eqref{eq: g rho} we have
$$
\tau_{k+1,0}\circ h_i=f_0\circ\tau_{k,0} \text{ and } \rho_{k+1,0}\circ h_i=g_0\circ\rho_{k,0}
$$
hence we get
$$
h_i^*(a^{k+1,-k})\in \Hom^{-k}_{\mathcal{R}_k}(\tau_{k,0}^*f_0^*(E),\rho_{k,0}^*g_0^*(E)).
$$

Then we define $\Phi_{0}(\mathcal{E})$ as follows
\begin{equation}
\Phi^{k,-k}_{0}(\mathcal{E})=\sum_{i=0}^k(-1)^ih_i^*(a^{k+1,-k})\in \Hom^{-k}_{\mathcal{R}_k}(\tau_{k,0}^*f_0^*(E),\rho_{k,0}^*g_0^*(E)).
\end{equation}

\begin{lemma}\label{lemma: combinatorics of h}
For any $k\geq 0$ we have
\begin{equation}\label{eq: sum partial h}
\sum_{i=1}^{k-1}\sum_{j=0}^{k-1}(-1)^{i+j}\partial_i^*h_j^*=\sum_{i=1}^k\sum_{j=0}^k(-1)^{i+j-1}h_j^*\partial_i^*.
\end{equation}
Moreover for two morphisms $\phi:\mathcal{F}\to \mathcal{G}$ and $\psi: \mathcal{E}\to \mathcal{F}$ with degree $m$ and $n$ respectively, we have
\begin{equation}\label{eq: sum rho g}
\begin{split}
&\sum_{i=0}^{k}\sum_{j=0}^{k-i}(-1)^{(m-i)(k-i)+j}(\rho_{k,i}^*g^*\phi^{i,m-i})(\tau_{k,k-i}^*h_j^*\psi^{k-i+1,n-1+i-k})\\
=&\sum_{j=0}^k(-1)^{j+m}h_j^*[\sum_{i=0}^j(-1)^{(m-i)(k-i+1)}(\rho_{k+1,i}^*\phi^{i,m-i})(\tau_{k+1,k-i+1}^*\psi^{k-i+1,n-1+i-k})];
\end{split}
\end{equation}
and
\begin{equation}\label{eq: sum tau f}
\begin{split}
&\sum_{i=0}^{k}\sum_{j=0}^i(-1)^{(m-i-1)(k-i)+j}(\rho_{k,i}^*h_j^*\phi^{i+1,m-i-1})(\tau_{k,k-i}^*f^*\psi^{k-i,n+i-k})\\
=&\sum_{j=0}^k(-1)^jh_j^*[\sum_{i=j+1}^{k+1}(-1)^{(m-i)(k-i+1)}(\rho_{k+1,i}^*\phi^{i,m-i})(\tau_{k+1,k-i+1}^*\psi^{k-i+1,n-1+i-k})];
\end{split}
\end{equation}
\end{lemma}
\begin{proof}[The proof of Lemma \ref{lemma: combinatorics of h}]
These identities follow from Lemma \ref{lemma: f, g and h} and re-indexing.
\end{proof}

Then we prove that the morphism $\Phi_{0}(\mathcal{E})$ is closed, i.e.  for any $k\geq 0$ we have
\begin{equation}\label{eq: Phi is closed}
\delta \Phi_{0}(\mathcal{E})+g^*(a)\cdot \Phi_{0}(\mathcal{E})-\Phi_{0}(\mathcal{E})\cdot f^*(a)=0.
\end{equation}
First we have
\begin{equation*}
\begin{split}
(\delta&\Phi_{0}(\mathcal{E}))^{k,1-k}=\sum_{i=1}^{k-1}(-1)^i\partial^*_i\Phi^{k-1,1-k}_{0,\mathcal{E}}\\
&=\sum_{i=1}^{k-1}(-1)^i\partial^*_i\sum_{j=0}^{k-1}(-1)^jh_j^*a^{k,1-k}\\
&=\sum_{j=0}^{k-1}\sum_{i=1}^{k-1}(-1)^{i+j}\partial^*_ih_j^*a^{k,1-k}
\end{split}
\end{equation*}
By \eqref{eq: sum partial h} we have
\begin{equation*}
\begin{split}
&(\delta\Phi_{0}(\mathcal{E}))^{k,1-k}\\
&=\sum_{j=0}^k\sum_{i=1}^k(-1)^{i+j-1}h_j^*\partial_i^*a^{k,1-k}\\
&=\sum_{j=0}^k(-1)^{j-1}h_j^*\sum_{i=1}^k(-1)^i \partial_i^*a^{k,1-k}.
\end{split}
\end{equation*}
Similarly by \eqref{eq: sum rho g} we have
$$
(g^*(a)\cdot \Phi_{0}(\mathcal{E}))^{k,1-k}=\sum_{j=0}^k(-1)^{j-1}h_j^*\sum_{i=0}^j(-1)^{(1-i)(k+1-i)}\rho_{k+1,i}^*a^{i,1-i}\tau_{k+1,k+1-i}^*a^{k+1-i,i-k},
$$
and by \eqref{eq: sum tau f} we have
$$
(\Phi_{0}(\mathcal{E})\cdot f^*(a))^{k,1-k}=\sum_{j=0}^k(-1)^{j-1}h_j^*\sum_{i=j+1}^{k+1}(-1)^{(1-i)(k+1-i)}\rho_{k+1,i}^*a^{i,1-i}\tau_{k+1,k+1-i}^*a^{k+1-i,i-k}.
$$
Sum up these three identities and use $\delta a+a\cdot a=0$ we get \eqref{eq: Phi is closed}.

Finally we notice that $\Phi^{0,0}_{0}(\mathcal{E})=h_0^*(a^{1,0})\in \Hom^0_{\mathcal{R}_0}(\tau_{0,0}^*f_0^*(E),\rho_{0,0}^*g_0^*(E))$ is a quasi-isomorphism since $a^{1,0}\in \Hom^0_{\mathcal{S}_1}(\tau_{1,0}^*(E),\rho_{1,0}^*(E))$ is invertible up to homotopy. Therefore we know that $\Phi_{0}(\mathcal{E})$ is a weak equivalence.
\end{proof}

\begin{rmk}
In general  for a morphsim $\phi: \mathcal{E}\to \mathcal{F}$, 
$$g^*(\phi) \cdot\Phi_{0}(\mathcal{E})-(-1)^{|\phi|} \Phi_{0}(\mathcal{F})\cdot f^*(\phi)\neq 0.$$
Therefore $\Phi_{0}(-)$ does not give a dg-natural transformation from $f^*$ to $g^*$. Nevertheless we can extend $\Phi_{0}(-)$ to an $A_{\infty}$-natural transformation. 
\end{rmk}

\subsection{Simplicial homotopies and $A_{\infty}$-natural transformations}\label{subsection: simplicial homotopies and A infinity}
In this section we introduce higher $\Phi_l$'s. Consider a degree $m$ morphism $\phi: \mathcal{E}\to \mathcal{F}$ in $\Tw(\mathcal{V},\mathcal{S})$. For any $k\geq 0$ we have
$$
\phi^{k+1,m-k-1}\in \Hom^{m-k-1}_{\mathcal{S}_{k+1}}(\tau_{k+1,0}^*(E),\rho_{k+1,0}^*(F)).
$$
Hence for $0\leq i\leq k$ we have
$$
h_i^*\phi^{k+1,m-k-1}\in \Hom^{m-k-1}_{\mathcal{R}_k}(h_i^*\tau_{k+1,0}^*(E),h_i^*\rho_{k+1,0}^*(F))
$$
and by \eqref{eq: f tau} and \eqref{eq: g rho} we have
$$
h_i^*\phi^{k+1,m-k-1}\in \Hom^{m-k-1}_{\mathcal{R}_k}(\tau_{k,0}^*f^*(E),\rho_{k,0}^*g^*(F))
$$

Now we are ready for the following definition.

\begin{defi}\label{defi: higher terms of A infinity nt}
For a degree $m$ morphism  $\phi: \mathcal{E}\to \mathcal{F}$ in $\Tw(\mathcal{V},\mathcal{S})$, we define $\Phi_1(\phi): f^*\mathcal{E}\to g^*\mathcal{F}$ as
\begin{equation}
[\Phi_1(\phi)]^{k,m-k-1}:=(-1)^{m-1}\sum_{i=0}^k(-1)^ih_i^*\phi^{k+1,m-k-1}.
\end{equation}
For $l\geq 2$ we simply define $\Phi_l=0$.
\end{defi}

We need to prove that $\Phi_0$ and $\Phi_1$ together form an $A_{\infty}$-natural transformation from $f$ to $g$. First we prove the following proposition.

\begin{prop}\label{prop: Phi is A infinity nt level 1}
For a degree $m$ morphism  $\phi: \mathcal{E}\to \mathcal{F}$ in $\Tw(\mathcal{V},\mathcal{S})$, we have
\begin{equation}
d[\Phi_1(\phi)]-\Phi_1(d\phi)+(-1)^{m-1}g^*(\phi)\Phi_0(\mathcal{E})+(-1)^m\Phi_0(\mathcal{F})f^*(\phi)=0
\end{equation}
\end{prop}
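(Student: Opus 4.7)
The plan is to verify the identity bidegree-by-bidegree at each $(k,m-k)$, following the template of the proof of Proposition \ref{prop: homotopic functors induces quasi-isomorphic dg-functors between twisted complexes}. I would expand the left-hand side by unfolding the $\Tw$-differential $d[\Phi_1(\phi)] = \delta\Phi_1(\phi)+g^*(b)\cdot\Phi_1(\phi)-(-1)^{m-1}\Phi_1(\phi)\cdot f^*(a)$, giving five summands in total once the two extra pieces $(-1)^{m-1}g^*(\phi)\Phi_0(\mathcal{E})$ and $(-1)^m\Phi_0(\mathcal{F})f^*(\phi)$ are included. I would then substitute into each summand the explicit definitions $\Phi_1(\phi)^{j,m-1-j}=(-1)^{m-1}\sum_i(-1)^i h_i^*\phi^{j+1,m-1-j}$ and $\Phi_0(\mathcal{E})^{l,-l}=\sum_i(-1)^i h_i^*a^{l+1,-l}$ (and analogously for $\mathcal{F}$). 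The right-hand side $\Phi_1(d\phi)^{k,m-k}=(-1)^m\sum_j(-1)^j h_j^*(d\phi)^{k+1,m-k}$ likewise breaks up according to $d\phi=\delta\phi+b\cdot\phi-(-1)^m\phi\cdot a$ into a $\delta\phi$-piece, a $(b\cdot\phi)$-piece, and a $(\phi\cdot a)$-piece.

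Next I would apply the three identities of Lemma \ref{lemma: combinatorics of h} to rewrite every term on the left-hand side in the common form $\sum_j(-1)^j h_j^*[\cdots]$. Identity \eqref{eq: sum partial h} converts the $\delta$-summand into $(-1)^m\sum_j(-1)^j h_j^*(\delta\phi)^{k+1,m-k}$, matching the $\delta\phi$-piece on the right. The two summands of the form $g^*(\cdot)\cdot(\cdot)$, namely $(g^*(b)\cdot\Phi_1(\phi))^{k,m-k}$ and $(-1)^{m-1}(g^*(\phi)\cdot\Phi_0(\mathcal{E}))^{k,m-k}$, become via \eqref{eq: sum rho g} expressions of the form $h_j^*$ applied to the \emph{head} partial sum (over $0\leq i\leq j$) of the components of $(b\cdot\phi)^{k+1,m-k}$ and of $(\phi\cdot a)^{k+1,m-k}$ respectively. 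Dually, the two summands of the form $(\cdot)\cdot f^*(\cdot)$, namely $(-1)^m(\Phi_0(\mathcal{F})\cdot f^*(\phi))^{k,m-k}$ and $-(-1)^{m-1}(\Phi_1(\phi)\cdot f^*(a))^{k,m-k}$, become via \eqref{eq: sum tau f} expressions of the form $h_j^*$ applied to the \emph{tail} partial sum (over $j+1\leq i\leq k+1$) of the components of $(b\cdot\phi)^{k+1,m-k}$ and of $(\phi\cdot a)^{k+1,m-k}$ respectively.

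Since head plus tail equals the full sum, the two $(b\cdot\phi)$-contributions combine to $(-1)^m\sum_j(-1)^j h_j^*(b\cdot\phi)^{k+1,m-k}$, and the two $(\phi\cdot a)$-contributions combine to $-\sum_j(-1)^j h_j^*(\phi\cdot a)^{k+1,m-k}$. Together with the $\delta\phi$-match, the left-hand side collapses to $(-1)^m\sum_j(-1)^j h_j^*(d\phi)^{k+1,m-k}=\Phi_1(d\phi)^{k,m-k}$, giving the desired identity. Note that no Maurer-Cartan equation is invoked: unlike the closedness statement of Proposition \ref{prop: homotopic functors induces quasi-isomorphic dg-functors between twisted complexes}, this is a purely combinatorial consequence of Lemma \ref{lemma: combinatorics of h} together with the composition and differential formulas in $\Tw$.

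The main obstacle is careful sign bookkeeping. Each application of Lemma \ref{lemma: combinatorics of h} introduces factors such as $(-1)^{(M-i)(k-i+1)}$, $(-1)^{j+M}$, and $(-1)^{i(k-i)}$ (where $M$ denotes the degree of the first morphism in the composition treated by the lemma), and these must combine correctly with the prefactor $(-1)^{m-1}$ built into Definition \ref{defi: higher terms of A infinity nt} and with the $(-1)^{m-1},(-1)^m$ from the statement so that the head and tail partitions exactly reproduce the signs appearing inside the expansions of $(b\cdot\phi)^{k+1,m-k}$ and $(\phi\cdot a)^{k+1,m-k}$. Once these parities are checked termwise, the rest is a pure re-indexing identical in spirit to the closedness argument for $\Phi_0$.
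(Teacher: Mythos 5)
Your proposal is correct and follows essentially the same route as the paper: expand $d[\Phi_1(\phi)]$ via the $\Tw$-differential, apply the three identities of Lemma \ref{lemma: combinatorics of h} to pull everything into the form $\sum_j(-1)^j h_j^*[\cdots]$, and observe that the head/tail partial sums from \eqref{eq: sum rho g} and \eqref{eq: sum tau f} recombine into the full expressions for $(b\cdot\phi)^{k+1,m-k}$ and $(\phi\cdot a)^{k+1,m-k}$, so that the bracketed sum is exactly $(d\phi)^{k+1,m-k}$. Your observation that the Maurer--Cartan equation is not needed here (in contrast to the closedness of $\Phi_0$) is also consistent with the paper's argument.
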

\begin{proof}
First we have
\begin{equation*}
\begin{split}
&[d\Phi_1(\phi)]^{k,m-k}\\
=&[\delta \Phi_1(\phi)]^{k,m-k}+[g^*(b)\cdot \Phi_1(\phi)]^{k,m-k}-(-1)^{m-1}[\Phi_1(\phi)\cdot f^*(a)]^{k,m-k}.
\end{split}
\end{equation*}
By definition 
\begin{equation*}
\begin{split}
[\delta \Phi_1(\phi)]^{k,m-k}&=\sum_{i=1}^{k-1}(-1)^i\partial_i^*\sum_{j=0}^{k-1}(-1)^{j+m-1}h_j^*\phi^{k,m-k}\\
&=\sum_{i=1}^{k-1}\sum_{j=0}^{k-1}(-1)^{i+j+m-1}\partial_i^*h_j^*\phi^{k,m-k}
\end{split}
\end{equation*}
and by \eqref{eq: sum partial h} we have
\begin{equation}\label{eq: delta Phi_1 phi}
[\delta \Phi_1(\phi)]^{k,m-k}=\sum_{i=0}^k(-1)^{i+m}h_i^*\sum_{j=1}^k(-1)^j\partial_j^*\phi^{k,m-k}.
\end{equation}
Next
\begin{equation*}
\begin{split}
&[g^*(b)\cdot \Phi_1(\phi)]^{k,m-k}\\
=&\sum_{i=0}^k(-1)^{(1-i)(k-i)}[\rho_{k,i}^*g^*(b)^{i,1-i}][ \tau_{k,k-i}^*\Phi_1(\phi)^{k-i,m-1-k+i}]\\
=&\sum_{i=0}^k\sum_{j=0}^{k-i}(-1)^{(1-i)(k-i)+j+m-1}\rho_{k,i}^*g^*b^{i,1-i}\tau_{k,k-i}^*h_j^*\phi^{k-i+1,m-1-k+i}
\end{split}
\end{equation*}
and by \eqref{eq: sum rho g} we have
\begin{equation}
[g^*(b)\cdot \Phi_1(\phi)]^{k,m-k}=\sum_{i=0}^k(-1)^{i+m}h_i^*\sum_{j=0}^i(-1)^{(1-j)(k-j+1)}\rho_{k+1,j}^*b^{j,1-j}\tau_{k+1,k+1-j}^*\phi^{k-j+1,m-1-k+j}.
\end{equation}
Similarly by \eqref{eq: sum tau f} we have
\begin{equation}
[\Phi_1(\phi)\cdot f^*(a)]^{k,m-k}=\sum_{i=0}^k(-1)^{i+m-1}h_i^*\sum_{j=i+1}^{k+1}(-1)^{(m-j)(k-j+1)}\rho_{k+1,j}^*\phi^{j,m-j}\tau_{k+1,k+1-j}^*a^{k-j+1,j-k}.
\end{equation}

Again by \eqref{eq: sum rho g} we get
\begin{equation}
[g^*(\phi)\Phi_0(\mathcal{E})]^{k,m-k}=\sum_{i=0}^k(-1)^{i-m}h_i^*\sum_{j=0}^i(-1)^{(m-j)(k-j+1)}\rho_{k+1,j}^*\phi^{j,m-j}\tau_{k+1,k+1-j}^*a^{k-j+1,j-k}
\end{equation}
and by \eqref{eq: sum tau f} we get
\begin{equation}\label{eq: Phi_0 F f phi}
[\Phi_0(\mathcal{F})f^*(\phi)]^{k,m-k}=\sum_{i=0}^k(-1)^ih_i^*\sum_{j=i+1}^{k+1}(-1)^{(1-j)(k-j+1)}\rho_{k+1,j}^*b^{j,1-j}\tau_{k+1,k+1-j}^*\phi^{k-j+1,m-1-k+j}.
\end{equation}

Add up Equations \eqref{eq: delta Phi_1 phi} through \eqref{eq: Phi_0 F f phi} we get
\begin{equation}\label{eq: three terms in Phi_1 eq}
\begin{split}
&[d[\Phi_1(\phi)]]^{k,m-k}+(-1)^m[g^*(\phi)\Phi_0(\mathcal{E})]^{k,m-k}-(-1)^m[\Phi_0(\mathcal{F})f^*(\phi)]^{k,m-k}\\
=&(-1)^{m}\sum_{i=0}^k(-1)^ih_i^*\Bigg[\sum_{j=1}^k(-1)^j\partial_j^*\phi^{k,m-k}\\
+&\sum_{j=0}^{k+1}(-1)^{(1-j)(k-j+1)}\rho_{k+1,j}^*b^{j,1-j}\tau_{k+1,k+1-j}^*\phi^{k-j+1,m-1-k+j}\\
-&(-1)^m\sum_{j=0}^{k+1}(-1)^{(m-j)(k-j+1)}\rho_{k+1,j}^*\phi^{j,m-j}\tau_{k+1,k+1-j}^*a^{k-j+1,j-k}\Bigg].
\end{split}
\end{equation}
We observe that the right hand side of \eqref{eq: three terms in Phi_1 eq} is exactly $[\Phi_1(d\phi)]^{k,m-k}$, hence we complete the proof.
\end{proof}

\begin{prop}\label{prop: Phi is A infinity nt level 2}
For two morphisms $\phi:\mathcal{F}\to \mathcal{G}$ and $\psi: \mathcal{E}\to \mathcal{F}$ in $\Tw(\mathcal{V},\mathcal{S})$ with degree $m$ and $n$ respectively, we have
\begin{equation}
(-1)^{m-1}g^*(\phi)\cdot\Phi_1(\psi)+(-1)^{-m-n+1}\Phi_1(\phi)\cdot f^*(\psi)+(-1)^{m}\Phi_1(\phi\cdot \psi)=0
\end{equation}
\end{prop}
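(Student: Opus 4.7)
The identity in question is exactly the $l=2$ component \eqref{eq: l=2 for A-infty natural transformation} of the $A_\infty$-naturality condition, simplified by the fact that $\Phi_l=0$ for $l\geq 2$: both the $d(\Phi^2)$ and the $\Phi^2(d(-))$ summands drop out, leaving precisely the three terms in the statement. The plan is therefore to verify the identity by direct computation: expand each term using Definition \ref{defi: higher terms of A infinity nt} and the composition formula \eqref{equation: composition of maps between graded sheaves}, apply the combinatorial identities of Lemma \ref{lemma: combinatorics of h} to bring the three expressions into a common form $\sum_{j=0}^{k}(-1)^{j}h_{j}^{*}[\cdots]$, and then observe cancellation.

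Concretely, I would first expand $[g^{*}(\phi)\cdot \Phi_{1}(\psi)]^{k,m+n-k-1}$ as a double sum of the shape
\[
(-1)^{n-1}\sum_{i=0}^{k}\sum_{j=0}^{k-i}(-1)^{(m-i)(k-i)+j}(\rho_{k,i}^{*} g^{*}\phi^{i,m-i})(\tau_{k,k-i}^{*} h_{j}^{*}\psi^{k-i+1,n-1-k+i}),
\]
which matches the left-hand side of identity \eqref{eq: sum rho g} and so rewrites as $(-1)^{n+m-1}\sum_{j=0}^{k}(-1)^{j}h_{j}^{*}$ applied to an inner sum over $i\in\{0,\ldots,j\}$. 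Entirely parallel bookkeeping recognizes $[\Phi_{1}(\phi)\cdot f^{*}(\psi)]^{k,m+n-k-1}$ as the LHS of \eqref{eq: sum tau f}, which recasts it as $(-1)^{m-1}\sum_{j=0}^{k}(-1)^{j}h_{j}^{*}$ applied to an inner sum over $i\in\{j+1,\ldots,k+1\}$; here one has to remember that $\Phi_{1}(\phi)$ has total degree $m-1$, which is why the composition formula produces the exponent $(m-1-i)(k-i)$. Finally, $[\Phi_{1}(\phi\cdot\psi)]^{k,m+n-k-1}$ expands directly from Definition \ref{defi: higher terms of A infinity nt} followed by a single application of \eqref{equation: composition of maps between graded sheaves} to $\phi\cdot\psi$, producing $(-1)^{m+n-1}\sum_{j=0}^{k}(-1)^{j}h_{j}^{*}$ acting on the full inner sum over $i\in\{0,\ldots,k+1\}$.

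The cancellation is now immediate. In all three cases the inner summand is literally the same expression, namely
\[
(-1)^{(m-i)(k-i+1)}(\rho_{k+1,i}^{*}\phi^{i,m-i})(\tau_{k+1,k-i+1}^{*}\psi^{k-i+1,n-1-k+i}).
\]
Multiplying by the prescribed prefactors $(-1)^{m-1}$, $(-1)^{-m-n+1}$ and $(-1)^{m}$ respectively, and using $(-1)^{-a}=(-1)^{a}$, the three outer coefficients collapse to $(-1)^{n+j}$, $(-1)^{n+j}$ and $-(-1)^{n+j}$. Since $\{0,\ldots,j\}$ and $\{j+1,\ldots,k+1\}$ partition $\{0,\ldots,k+1\}$, the first two $j$-th contributions sum to exactly the negative of the third, for every $j$ and every $k$, which is the desired identity.

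The genuine combinatorial content is already packaged in identities \eqref{eq: sum rho g} and \eqref{eq: sum tau f} of Lemma \ref{lemma: combinatorics of h}, so no new ideas are required beyond those that proved Proposition \ref{prop: Phi is A infinity nt level 1}. The only delicate point, and essentially the sole opportunity for error, is consistent sign bookkeeping and in particular the correct use of the degree $m-1$ of $\Phi_{1}(\phi)$ when it is substituted into \eqref{equation: composition of maps between graded sheaves}.
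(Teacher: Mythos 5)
Your proposal is correct and is exactly the argument the paper intends: the paper's own proof of this proposition consists of the single sentence that it "is a consequence of \eqref{eq: sum rho g} and \eqref{eq: sum tau f}" with details left to the reader, and you have supplied precisely those details (expansion via Definition \ref{defi: higher terms of A infinity nt} and \eqref{equation: composition of maps between graded sheaves}, reduction to a common form $\sum_j(-1)^jh_j^*[\cdots]$ via Lemma \ref{lemma: combinatorics of h}, and cancellation from the partition $\{0,\ldots,j\}\sqcup\{j+1,\ldots,k+1\}=\{0,\ldots,k+1\}$). The sign bookkeeping, including the use of total degree $m-1$ for $\Phi_1(\phi)$ and the collapse of the outer coefficients to $(-1)^{n+j}$, $(-1)^{n+j}$, $-(-1)^{n+j}$, checks out.
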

\begin{proof}
Again it is a consequence of \eqref{eq: sum rho g} and \eqref{eq: sum tau f} and the details are left to the readers.
\end{proof}

\begin{thm}\label{thm: simplicial homotopies and A infinity nt}
Let $f$ and $g$ be two simplicial maps between simplicial ringed spaces   $(\mathcal{U}, \mathcal{R})$ and  $(\mathcal{V},\mathcal{S})$. Let $h$ be a simplicial homotopy between $f$ and $g$ as in Definition \ref{defi: combinatorial simplicial homotopy between simplicial spaces}. Let $\Phi_0$ be as in Proposition \ref{prop: homotopic functors induces quasi-isomorphic dg-functors between twisted complexes} and $\Phi_1$ be as in Definition \ref{defi: higher terms of A infinity nt}. Then the collection $\Phi=\{\Phi_0,\Phi_1,0,0,\ldots\}$ is an $A_{\infty}$-natural transformation from $f^*$ to $g^*$.
\end{thm}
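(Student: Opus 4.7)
The plan is to unpack the closedness condition $d^\infty\Phi = 0$ from Definition \ref{def: A_infty natural transformation} level by level, using that $\Phi$ is concentrated in its two lowest components. First I would confirm that $\Phi$ has $A_\infty$-degree $0$ in the sense of Definition \ref{def: A_infty-prenatural transformation}: the component $\Phi_0(\mathcal{E})^{k,-k}$ lies in $\Hom^{-k}$, so $\Phi_0(\mathcal{E})$ is a degree $0$ morphism of twisted complexes; and $[\Phi_1(\phi)]^{k,m-k-1}$ sends a degree $m$ morphism $\phi$ to a degree $m-1$ morphism, so $\Phi_1$ sits in the $A_\infty$ slot of internal degree $-1 = 0 - 1$. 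All higher $\Phi_l$ are zero, hence of any degree trivially.

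Next I would verify $(d^\infty\Phi)^l = 0$ for each $l \geq 0$ using the formula in Definition \ref{def: differential of A_infty-prenatural transformation}. The $l = 0$ equation is $d\Phi_0(\mathcal{E}) = 0$, which is exactly the closedness part of Proposition \ref{prop: homotopic functors induces quasi-isomorphic dg-functors between twisted complexes}. The $l = 1$ equation, namely \eqref{eq: l=1 for A-infty natural transformation} applied to a morphism $\phi$ of degree $m$, coincides with the identity established in Proposition \ref{prop: Phi is A infinity nt level 1} after a small sign rearrangement using $(-1)^{m-1} = -(-1)^m$. The $l = 2$ equation \eqref{eq: l=2 for A-infty natural transformation} collapses, upon setting $\Phi^2 = 0$, to the three-term relation of Proposition \ref{prop: Phi is A infinity nt level 2}; again a brief sign check (exploiting $(-1)^{-m-n+1} = -(-1)^{m+n}$) shows the two statements are equivalent.

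Finally, for $l \geq 3$, I would inspect the explicit formula for $(d^\infty\Phi)^l$ term by term. Each summand on the right-hand side is either $d(\Phi^l)$, $G(u_l)\Phi^{l-1}$, $\Phi^{l-1}F(u_1)$, $\Phi^l(\ldots \otimes du_i \otimes \ldots)$, or $\Phi^{l-1}(\ldots \otimes u_{i+1} u_i \otimes \ldots)$. Since $l \geq 3$ forces both $\Phi^l = 0$ and $\Phi^{l-1} = 0$, every such term vanishes and the identity holds automatically. In summary, all nontrivial content is already packaged in Propositions \ref{prop: homotopic functors induces quasi-isomorphic dg-functors between twisted complexes}, \ref{prop: Phi is A infinity nt level 1}, and \ref{prop: Phi is A infinity nt level 2}, whose proofs are driven by the combinatorial identities of Lemma \ref{lemma: combinatorics of h}. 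The only possible obstacle in assembling the theorem is matching sign conventions between the abstract $A_\infty$ formalism and the twisted-complex formulas; I expect this to amount to routine mod-$2$ arithmetic done explicitly for $l=1$ and $l=2$.
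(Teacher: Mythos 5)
Your proposal is correct and follows essentially the same route as the paper: unpack $d^{\infty}\Phi=0$ level by level, obtain the $l=0,1,2$ cases from Propositions \ref{prop: homotopic functors induces quasi-isomorphic dg-functors between twisted complexes}, \ref{prop: Phi is A infinity nt level 1}, and \ref{prop: Phi is A infinity nt level 2} respectively, and observe that for $l\geq 3$ every term involves $\Phi^l$ or $\Phi^{l-1}$ and hence vanishes. Your extra attention to the degree bookkeeping and the sign reconciliation between the $A_\infty$ formalism and the twisted-complex formulas is a welcome refinement but does not change the argument.
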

\begin{proof}
According to Definition \ref{def: differential of A_infty-prenatural transformation} and Definition  \ref{def: A_infty natural transformation}, all we need to prove is that $\Phi$ is closed under $d^{\infty}$, i.e. 
\begin{equation*}
\begin{split}
&d(\Phi_l(u_l\otimes\ldots \otimes u_1))+(-1)^{|u_l|-1}g^*(u_l)\cdot \Phi_{l-1}(u_{l-1}\otimes \ldots \otimes u_1)\\
&+(-1)^{-|u_1|-\ldots-|u_l|+l-1}\Phi_{l-1}(u_l\otimes\ldots\otimes u_2)\cdot f^*(u_1)\\
&+\sum_{i=1}^l(-1)^{|u_l|+\ldots+|u_{i+1}|+l-i+1}\Phi_{l}(u_l\otimes\ldots du_i\otimes \ldots u_1)\\
&+\sum_{i=1}^{l-1}(-1)^{|u_l|+\ldots+|u_{i+1}|+l-i+1}\Phi_{l-1}(u_l\otimes\ldots u_{i+1}\cdot u_i\otimes \ldots u_1)=0
\end{split}
\end{equation*}
for $l\geq 0$. According to \eqref{eq: l=1 for A-infty natural transformation} and \eqref{eq: l=2 for A-infty natural transformation},  for $l=0$, $1$, and $2$ these are consequences of Propositions \ref{prop: homotopic functors induces quasi-isomorphic dg-functors between twisted complexes}, \ref{prop: Phi is A infinity nt level 1}, and \ref{prop: Phi is A infinity nt level 2} respectively. For $l\geq 3$ it is trivial since $\Phi_l=0$ for $l\geq 2$.
\end{proof}

\begin{rmk}
It seems surprising why we can stop at $\Phi_1$. Actually in the definition of twisted complexes, we have only differential $a^{k,1-k}$ and maps $\phi^{k,m-k}$, whose pull back under $h$ give $\Phi^0$ and $\Phi^1$ respectively. Since the compositions of morphisms between twisted complexes are strictly associative, we can stop at $\Phi^1$. If the compositions were  weakly associative and we had higher associators, then we would have higher terms $\Phi^l$, $l\geq 2$ in the $A_{\infty}$-natural transformation. 
\end{rmk}

\section{Simplicial homotopies and twisted perfect complexes}\label{section: simplicial homotopy and twisted perfect complexes}
In this section we refine Theorem \ref{thm: simplicial homotopies and A infinity nt} for twisted perfect complexes. First we review the concept of twisted perfect complexes.

\subsection{A review of twisted perfect complexes}\label{subsection: twisted perfect complexes}
We are often not interested in all complexes of $\mathcal{R}$-modules but only some more convenient subcategory.
In this section we consider the contravariant functor
$$
\StrPerf: \text{Ringed Space}^{op}\rightarrow \dgCat
$$
which assigns to each ringed space $X$  the dg-category of strictly perfect complexes of $\mathcal{R}$-modules on $X$, i.e. bounded complexes of locally free finitely generated $\mathcal{R}$-modules on $X$. As before let $(\mathcal{U}, \mathcal{R})$ be a simplicial ringed space then we have a cosimplicial diagram of dg-categories.

\begin{equation}\label{equation: cosimplicial diagram of perfect Cpx of a simplicial space}
\begin{tikzcd}
\StrPerf(U_0,  \mathcal{R}_0) \arrow[yshift=0.7ex]{r}\arrow[yshift=-0.7ex]{r}&  \StrPerf(U_1,  \mathcal{R}_1) \arrow[yshift=1ex]{r}\arrow{r}\arrow[yshift=-1ex]{r}  &   \StrPerf(U_2,  \mathcal{R}_2) \cdots
\end{tikzcd}
\end{equation}

We have the following variant of twisted complexes.
\begin{defi}\label{defi: twisted perfect complex}
A \emph{twisted perfect complex} $\mathcal{E}=(E^{\bullet}_i,a)$ on a simplicial ringed space $(\mathcal{U}, \mathcal{R})$ is the same as twisted complex in Definition \ref{defi: twisted complexes on simplicial spaces} except that each $E^{\bullet}$ is a strictly perfect complex on $(U_0, \mathcal{R}_0)$.

The twisted perfect complexes also form a dg-category and we denote it by $\Tw_{\Perf}(\mathcal{U},\mathcal{R})$. Obviously $\Tw_{\Perf}(\mathcal{U}, \mathcal{R})$ is a full dg-subcategory of Tw$(\mathcal{U},\mathcal{R})$.
\end{defi}

\begin{lemma}\label{lemma: f star restrict to twisted perfect complexes}
Let $f: (\mathcal{U}, \mathcal{R})\to (\mathcal{V}, \mathcal{S})$ be a simplicial map between  simplicial ringed spaces. Then the dg-functor $f^*:\Tw(\mathcal{V},\mathcal{S})\to \Tw(\mathcal{U},\mathcal{R})$ restricts to the full dg-subcategory of twisted perfect complexes and gives a dg-functor
$$
f^*:\Tw_{\Perf}(\mathcal{V},\mathcal{S})\to \Tw_{\Perf}(\mathcal{U},\mathcal{R}).
$$
\end{lemma}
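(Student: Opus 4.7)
The plan is to observe that almost all of the work has already been done by Definition \ref{defi: pull back of twisted complexes}, which constructs a well-defined dg-functor $f^*: \Tw(\mathcal{V},\mathcal{S})\to \Tw(\mathcal{U},\mathcal{R})$. Since $\Tw_{\Perf}(\mathcal{V},\mathcal{S})$ is a \emph{full} dg-subcategory of $\Tw(\mathcal{V},\mathcal{S})$, the only thing left to verify is that $f^*$ sends objects of $\Tw_{\Perf}(\mathcal{V},\mathcal{S})$ into $\Tw_{\Perf}(\mathcal{U},\mathcal{R})$; the compatibility of $f^*$ with the differential and composition, as well as its action on morphisms, is inherited automatically from Definition \ref{defi: pull back of twisted complexes}.

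So the key step is the following: for $\mathcal{E}=(E,a)\in \Tw_{\Perf}(\mathcal{V},\mathcal{S})$, show that the underlying graded sheaf $f_0^*E$ of $f^*\mathcal{E}$ is again a strictly perfect complex, i.e.\ a bounded complex of locally free finitely generated $\mathcal{R}_0$-modules on $U_0$. This is a standard local calculation: the pullback $f_0^*=f_0^{-1}(-)\otimes_{f_0^{-1}\mathcal{S}_0}\mathcal{R}_0$ is additive, exact on free modules, preserves finite direct sums, and sends a locally free $\mathcal{S}_0$-module of finite rank to a locally free $\mathcal{R}_0$-module of the same rank. Applying this degreewise to the bounded complex $E^\bullet$ shows $f_0^*E^\bullet$ is strictly perfect.

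With the object level settled, I would then note that the twisted-complex data $f^*a$ on $f_0^*E$ satisfies the Maurer-Cartan equation and the non-degeneracy condition by Definition \ref{defi: pull back of twisted complexes} (the former is formal from $\delta a+a\cdot a=0$ and Lemma \ref{lemma: rho tau f}; the latter because pullback sends chain homotopy equivalences to chain homotopy equivalences, so $f_1^*a^{1,0}$ is still invertible up to homotopy). Together these confirm $f^*\mathcal{E}\in \Tw_{\Perf}(\mathcal{U},\mathcal{R})$.

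There is no real obstacle in this proof; the content is entirely the bookkeeping that ``strict perfectness is stable under pullback of ringed-space maps.'' The statement is recorded here simply to license the restriction of $f^*$ to the perfect subcategory, so that the $A_\infty$-natural transformation $\Phi$ built in Theorem \ref{thm: simplicial homotopies and A infinity nt} can later be upgraded, under additional hypotheses on $(\mathcal{U},\mathcal{R})$, to one admitting an $A_\infty$-quasi-inverse via Proposition \ref{prop: A infinity quasi-inverse and objectwise homotopy inverse}.
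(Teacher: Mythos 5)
Your proposal is correct and matches the paper's own (one-line) proof: the only substantive point is that $f_0^*$ sends finitely generated locally free $\mathcal{S}_0$-modules to finitely generated locally free $\mathcal{R}_0$-modules, so a strictly perfect complex pulls back to a strictly perfect complex, while the rest of the dg-functor structure is already supplied by Definition \ref{defi: pull back of twisted complexes} and fullness of the subcategory. Your extra remarks on the Maurer--Cartan equation and non-degeneracy are harmless elaborations of what the paper treats as immediate.
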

\begin{proof}
It is obvious since $f^*$ pulls back finitely generated locally free sheaves to finitely generated locally free sheaves.
\end{proof}

We have the following result for twisted perfect complexes which is similar to Proposition \ref{prop: twisted complex is homotopy limit}.
\begin{prop}\label{prop: twisted perfect complexes are totalization}
Let $\mathcal{U}$ be a  simplicial ringed  space . Then
the dg-category of twisted complexes $\Tw_{\Perf}(\mathcal{U}, \mathcal{R})$ gives an explicit construction of  $\holim \StrPerf(\mathcal{U})$.
\end{prop}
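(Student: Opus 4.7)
The plan is to deduce the statement directly from Proposition \ref{prop: twisted complex is homotopy limit} by observing that ``strictly perfect'' is a full-sub-dg-category condition stable under the cosimplicial structure maps, and hence that both sides of the desired equivalence arise by imposing the same condition on the corresponding objects of Proposition \ref{prop: twisted complex is homotopy limit}.

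First I would verify that \eqref{equation: cosimplicial diagram of perfect Cpx of a simplicial space} is a \emph{full} sub-cosimplicial diagram of \eqref{equation: second cosimplicial diagram of Cpx of an open cover}. At each simplicial level $n$, $\StrPerf(U_n, \mathcal{R}_n) \hookrightarrow \Cpx(U_n, \mathcal{R}_n)$ is a full dg-subcategory by definition. The cosimplicial structure maps are the pullbacks $\partial_i^*$ and $s_j^*$; since pullback along a map of ringed spaces sends finitely generated locally free modules to finitely generated locally free modules and preserves boundedness, these maps restrict to $\StrPerf(U_{n-1}, \mathcal{R}_{n-1}) \to \StrPerf(U_n, \mathcal{R}_n)$ and $\StrPerf(U_{n+1}, \mathcal{R}_{n+1}) \to \StrPerf(U_n, \mathcal{R}_n)$ respectively.

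Second, I would invoke the following formal fact about homotopy limits of cosimplicial diagrams of dg-categories: if $\mathcal{D}^\bullet \hookrightarrow \mathcal{C}^\bullet$ is a full sub-cosimplicial dg-category, then $\holim \mathcal{D}^\bullet$ is the full sub-dg-category of $\holim \mathcal{C}^\bullet$ spanned by those objects whose image in $\mathcal{C}^n$ lands in $\mathcal{D}^n$ for every $n$. Using the explicit model of the homotopy limit supplied by \cite{block2017explicit} and \cite{arkhipov2018homotopy2} (the one that realizes it as $\Tw(\mathcal{U}, \mathcal{R})$), an object is determined by its data on $(U_0, \mathcal{R}_0)$ together with higher transition data that is already a pullback of that level-$0$ data; consequently it suffices to require membership in $\mathcal{D}^0$.

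Third, I would translate via Proposition \ref{prop: twisted complex is homotopy limit}: an object of $\holim \Cpx(\mathcal{U}, \mathcal{R}) \simeq \Tw(\mathcal{U}, \mathcal{R})$ corresponds to a twisted complex $(E^\bullet, a)$, and its underlying level-$0$ object is $(E^\bullet, a^{0,1}) \in \Cpx(U_0, \mathcal{R}_0)$. By Definition \ref{defi: twisted perfect complex}, this complex lies in $\StrPerf(U_0, \mathcal{R}_0)$ precisely when $(E^\bullet, a)$ belongs to $\Tw_{\Perf}(\mathcal{U}, \mathcal{R})$. Combining the three steps, the equivalence of Proposition \ref{prop: twisted complex is homotopy limit} restricts to the desired identification $\Tw_{\Perf}(\mathcal{U}, \mathcal{R}) \simeq \holim \StrPerf(\mathcal{U}, \mathcal{R})$.

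The step I expect to require the most care is the second one: formally, saying that $\holim$ of a full sub-cosimplicial dg-category is cut out by a level-$0$ condition depends on the explicit model chosen, and one should either quote the corresponding general statement from \cite{block2017explicit} or \cite{arkhipov2018homotopy2} or verify it directly by unwinding the construction (the verification is then essentially automatic, since the higher-degree components $a^{k,1-k}$ are morphisms between pullbacks of $E^\bullet$ and impose no new condition on the underlying graded sheaf). Once this is in hand, the proof reduces to the bookkeeping above, and no new analytic or homological input beyond Proposition \ref{prop: twisted complex is homotopy limit} is required.
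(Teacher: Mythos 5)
The paper gives no proof of this proposition at all; it is stated bare, and the evident intent is that the same argument as for Proposition~\ref{prop: twisted complex is homotopy limit} applies verbatim, i.e.\ one invokes the general results of \cite{block2017explicit} and \cite{arkhipov2018homotopy2} directly for the cosimplicial diagram \eqref{equation: cosimplicial diagram of perfect Cpx of a simplicial space} (those constructions work for any suitable presheaf of dg-categories, and $\StrPerf$ qualifies since pullback preserves bounded complexes of finitely generated locally free modules). Your first step, checking that \eqref{equation: cosimplicial diagram of perfect Cpx of a simplicial space} is a well-defined sub-cosimplicial diagram, is exactly the (only) verification that route requires. But you then take a genuinely different route: deducing the $\StrPerf$ case by restricting the already-established equivalence for $\Cpx$.

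That route has a real gap in your second step. The ``formal fact'' as you state it --- that $\holim$ of a levelwise-full sub-diagram is the full subcategory of $\holim\mathcal{C}^\bullet$ spanned by objects whose components \emph{lie in} $\mathcal{D}^n$ --- cannot be literally correct, because the left side is only defined up to quasi-equivalence while on-the-nose membership in a non-replete full subcategory is not a quasi-equivalence-invariant condition. The homotopy-invariant version of the statement replaces ``lies in $\mathcal{D}^n$'' by ``lies in the closure of $\mathcal{D}^n$ under quasi-isomorphism,'' and for $\StrPerf(U_n,\mathcal{R}_n)\subset\Cpx(U_n,\mathcal{R}_n)$ that closure is the category of complexes merely quasi-isomorphic to strictly perfect ones, which is strictly larger than $\StrPerf$. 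So even granting the corrected formal fact, your argument identifies $\holim\StrPerf(\mathcal{U})$ with the full subcategory of $\Tw(\mathcal{U},\mathcal{R})$ on twisted complexes $(E^\bullet,a)$ whose components are quasi-isomorphic to strictly perfect complexes, not with $\Tw_{\Perf}(\mathcal{U},\mathcal{R})$ as in Definition~\ref{defi: twisted perfect complex}, where $E^\bullet$ is literally strictly perfect. Closing that remaining gap is a nontrivial rectification statement (every such twisted complex is weakly equivalent in $\Tw(\mathcal{U},\mathcal{R})$ to one built on a strictly perfect $E^\bullet$), of the same flavor as, and not obviously easier than, the hypotheses-dependent results in Section~\ref{subsection: twisted perfect complexes} such as Proposition~\ref{prop: weak equivalence between twisted perfect complexes}. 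The direct application of the cited general theorems to the $\StrPerf$ diagram avoids this issue entirely, which is presumably why the paper proceeds that way.
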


The significance of twisted perfect complexes in geometry is given by the construction in \cite{o1981hirzebruch}. Moreover, we have the following result:

\begin{thm}\label{thm: twisted complexes enhancement}[\cite[Theorem 3.32]{wei2016twisted}]
Let $X$ be a quasi-compact and separated
or Noetherian scheme and $\mathcal{U}=\{U_i\}$ be an affine cover, then $\Tw_{\Perf}(\mathcal{U},\mathcal{O}_X)$ gives a dg-enhancement of $D_{\Perf}(X)$, the derived category of perfect complexes on $X$.
\end{thm}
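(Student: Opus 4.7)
The goal is to produce a dg-functor from a dg-model of $D_{\Perf}(X)$ to $\Tw_{\Perf}(\mathcal{U},\mathcal{O}_X)$ and show it is a quasi-equivalence. The guiding principle, via Proposition \ref{prop: twisted perfect complexes are totalization}, is that $\Tw_{\Perf}(\mathcal{U},\mathcal{O}_X)$ computes $\holim \StrPerf(\mathcal{U})$; under the hypotheses on $X$, Zariski hyperdescent for perfect complexes combined with the fact that each $U_i$ is affine should imply that this homotopy limit models $D_{\Perf}(X)$.

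First I would construct a dg-functor $\Phi: \Perf^{\mathrm{dg}}(X)\to \Tw_{\Perf}(\mathcal{U},\mathcal{O}_X)$, where $\Perf^{\mathrm{dg}}(X)$ denotes a fixed dg-enhancement of $D_{\Perf}(X)$ (for instance $h$-injective or $K$-projective resolutions of perfect complexes). Given a perfect complex $\mathcal{F}^{\bullet}$ on $X$, its restriction to each affine $U_i$ is quasi-isomorphic to a strictly perfect complex $E_i^{\bullet}$, because affine schemes have the resolution property for perfect objects. On $U_{ij}$ both $E_i$ and $E_j$ resolve the same complex, so projectivity yields comparison maps $a^{1,0}_{ij}$; on triple and higher intersections the failure of the cocycle condition is controlled by homotopies, which inductively produce the higher data $a^{k,1-k}_{i_0\ldots i_k}$ satisfying the Maurer-Cartan equation. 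This is the classical Toledo-Tong inductive construction, which goes through because $X$ is separated, so every $U_{i_0\ldots i_p}$ is affine and higher cohomology of quasi-coherent sheaves on it vanishes.

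Next I would verify that $\Phi$ is quasi-fully-faithful. The Hom complex in $\Tw_{\Perf}$ is the \v{C}ech-style totalization of $\prod_{p,q}\Hom^q(\tau_{p,0}^*E,\rho_{p,0}^*F)$ with total differential $d+\delta+[a,-]+[b,-]$; on the other hand $R\Hom_{\mathcal{O}_X}(\mathcal{F}^{\bullet},\mathcal{G}^{\bullet})$ admits a \v{C}ech computation through the same affine cover. Comparing the two bicomplexes column by column, using the vanishing of higher quasi-coherent cohomology on affines, yields the required quasi-isomorphism on Hom complexes.

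The main obstacle is essential surjectivity: showing that every twisted perfect complex is isomorphic in $\mathrm{Ho}(\Tw_{\Perf}(\mathcal{U},\mathcal{O}_X))$ to an object of the form $\Phi(\mathcal{F}^{\bullet})$. The natural route is a descent argument---using that $\StrPerf(-)$ satisfies Zariski hyperdescent on qcqs separated (or Noetherian) schemes, in the spirit of Thomason-Trobaugh---to realize a Maurer-Cartan datum $(E_i^{\bullet},a)$ as a coherent cocycle gluing to a globally defined perfect complex up to quasi-isomorphism. Equivalently, one can argue that $\mathrm{Ho}(\Tw_{\Perf}(\mathcal{U},\mathcal{O}_X))$ is generated as a thick subcategory by restrictions of a perfect generator of $D_{\Perf}(X)$, and invoke Thomason's density theorem. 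This is the only step where the topological hypotheses on $X$ enter essentially, and it is the technical heart of the proof.
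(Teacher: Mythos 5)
The paper does not actually prove this theorem: its ``proof'' is the single line ``See \cite{wei2016twisted} Theorem 3.32,'' so the result is imported wholesale from the author's earlier work and there is no internal argument to compare yours against. That said, your sketch is a reasonable reconstruction of the standard argument and broadly matches the strategy of the cited reference: the two essential ingredients there are exactly the ones you identify, namely the Toledo--Tong inductive construction of a twisted resolution of a perfect complex over an affine cover, and a \v{C}ech-style comparison of Hom complexes that is a quasi-isomorphism because higher quasi-coherent cohomology vanishes on the intersections. The main structural difference is the direction of the comparison functor: \cite{wei2016twisted} builds a sheafification dg-functor $\Tw_{\Perf}(\mathcal{U},\mathcal{O}_X)\to \Cpx(X)$ (totalizing the \v{C}ech bicomplex of a twisted complex), shows it lands in perfect complexes and is quasi-fully-faithful, and then uses Toledo--Tong to get essential surjectivity onto $D_{\Perf}(X)$; you instead build a functor out of a resolution model of $D_{\Perf}(X)$ and must prove essential surjectivity onto $\mathrm{Ho}\,\Tw_{\Perf}(\mathcal{U},\mathcal{O}_X)$, which is the harder direction.

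Two concrete points in your sketch deserve attention. First, your fallback for essential surjectivity --- that $\mathrm{Ho}\,\Tw_{\Perf}(\mathcal{U},\mathcal{O}_X)$ is thickly generated by restrictions of a perfect generator of $D_{\Perf}(X)$, plus Thomason's density theorem --- is dangerously close to circular: that generation statement is essentially equivalent to the descent claim you are trying to establish, and you give no independent argument for it. The concrete route is the one the reference takes: given a Maurer--Cartan datum $(E^{\bullet},a)$, sheafify it to a global complex, verify perfection, and exhibit an explicit weak equivalence between the original twisted complex and the Toledo--Tong resolution of its sheafification (this is where Proposition \ref{prop: weak equivalence between twisted perfect complexes} and the cohomological vanishing hypothesis do the work). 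Second, your argument invokes separatedness to guarantee that every $U_{i_0\ldots i_p}$ is affine, but the theorem also covers Noetherian schemes that need not be separated; in that case intersections of affines are only quasi-compact and a further argument (finite cohomological dimension, or a modified vanishing statement) is needed, so as written your proof does not cover the full hypothesis of the statement.
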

\begin{proof} See \cite{wei2016twisted} Theorem 3.32.
\end{proof}

The following proposition is about weak equivalences between twisted perfect complexes.

\begin{prop}\label{prop: weak equivalence between twisted perfect complexes}[\cite[Proposition 2.31]{wei2016twisted}]
Suppose the simplicial space $\mathcal{U}$ satisfies $H^k(U_i,\mathcal{S})=0$ for any $i\geq 0$, any  $k\geq 1$ and any locally free finitely generated sheaf of $\mathcal{R}_i$-modules $\mathcal{S}$. Let $\mathcal{E}$ and $\mathcal{F}$ be two objects in $\Tw_{\Perf}(\mathcal{U}, \mathcal{R})$ and $\phi: \mathcal{E}\to \mathcal{F}$ be a degree $0$ closed morphism. Then $\phi$ is a weak equivalence if and only if $\phi$ is invertible in the homotopy category Ho$\Tw_{\Perf}(\mathcal{U}, \mathcal{R})$.
\end{prop}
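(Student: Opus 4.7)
The plan is to address each direction of the bi-implication separately. The ``only if'' direction is essentially formal: the assignment $\mathcal{E}=(E^{\bullet},a)\mapsto (E^{\bullet},a^{0,1})$ and $\phi\mapsto \phi^{0,0}$ defines a dg-functor $\Tw_{\Perf}(\mathcal{U},\mathcal{R})\to \Cpx(U_0,\mathcal{R}_0)$, and hence carries homotopy equivalences to homotopy equivalences. Every homotopy equivalence in $\Cpx(U_0,\mathcal{R}_0)$ is a quasi-isomorphism, so $\phi$ is a weak equivalence in the sense of Definition \ref{defi: weak equivalence between twisted complexes}.

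For the ``if'' direction, I would construct a two-sided homotopy inverse $\psi:\mathcal{F}\to\mathcal{E}$ together with degree $-1$ morphisms $h:\mathcal{E}\to\mathcal{E}$ and $k:\mathcal{F}\to\mathcal{F}$ satisfying $\psi\phi - \id_{\mathcal{E}} = dh$ and $\phi\psi - \id_{\mathcal{F}} = dk$, by induction on the \v{C}ech degree $p$, building $\psi^{p,-p}$, $h^{p,-p-1}$, $k^{p,-p-1}$ one level at a time. For the base case $p=0$, the component $\phi^{0,0}$ is a quasi-isomorphism between bounded complexes of locally free finitely generated $\mathcal{R}_0$-modules on $U_0$. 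The hypothesis $H^k(U_0,\mathcal{S})=0$ for $k\geq 1$ and locally free finitely generated $\mathcal{S}$ forces every short exact sequence of such sheaves to split, so the acyclic mapping cone of $\phi^{0,0}$ admits a contracting homotopy, yielding $\psi^{0,0}$, $h^{0,-1}$, and $k^{0,-1}$.

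For the inductive step, suppose the data have been constructed for $j<p$ so that the defining equations hold modulo \v{C}ech degree $p$. Extending to level $p$ requires solving an equation of the form
$a^{0,1}\cdot \psi^{p,-p} \pm \psi^{p,-p}\cdot b^{0,1}=\theta$ inside $\Hom^{1-p}_{\mathcal{R}_p}(\tau_{p,0}^{*}F^{\bullet},\rho_{p,0}^{*}E^{\bullet})$, where $\theta$ is a closed element of this Hom-complex determined by the lower-degree data via the Maurer-Cartan equations for $a$ and $b$ and the inductive hypothesis. Since $\tau_{p,0}^{*}F^{\bullet}$ and $\rho_{p,0}^{*}E^{\bullet}$ are bounded complexes of locally free finitely generated $\mathcal{R}_p$-modules, the vanishing hypothesis on $U_p$ implies that the Hom-complex computes $\text{RHom}$ in the derived category; hence the fact that $\phi^{0,0}$ is already a quasi-isomorphism forces the Hom-complex to be acyclic in the relevant degree, and a solution $\psi^{p,-p}$ exists. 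The same mechanism produces $h^{p,-p-1}$ and $k^{p,-p-1}$.

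The main obstacle is not the conclusion but the identification of the obstruction class $\theta$ at each stage and the verification of its closedness under the Hom-differential. Conceptually the key input is that strict perfectness together with the cohomology vanishing on every $U_i$ promotes ``quasi-isomorphism on $U_0$'' to ``global contractibility of the cone'' uniformly across all \v{C}ech levels, and it is precisely this promotion that fails in the un-truncated setting of $\Tw(\mathcal{U},\mathcal{R})$ and explains why the result is specific to $\Tw_{\Perf}$.
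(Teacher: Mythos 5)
First, a framing remark: the paper does not prove this proposition in-house; the ``proof'' is a citation to \cite[Proposition 2.31]{wei2016twisted}, so your argument has to stand on its own. Your easy direction is correct in substance: the projection $(E^{\bullet},a)\mapsto(E^{\bullet},a^{0,1})$, $\phi\mapsto\phi^{0,0}$ is a dg-functor to $\Cpx(U_0,\mathcal{R}_0)$, so homotopy invertibility of $\phi$ forces $\phi^{0,0}$ to be a homotopy equivalence, hence a quasi-isomorphism. (Note you have the labels backwards: in ``weak equivalence if and only if invertible'' this is the ``if'' direction.) Your base case for the hard direction is also fine: the vanishing hypothesis splits bounded acyclic complexes of locally free finitely generated sheaves, so a quasi-isomorphism of strictly perfect complexes on $U_0$ is a homotopy equivalence.

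The genuine gap is in your inductive step. You justify solving $a^{0,1}\cdot\psi^{p,-p}\pm\psi^{p,-p}\cdot b^{0,1}=\theta$ by claiming that $\Hom^{\bullet}_{\mathcal{R}_p}(\tau_{p,0}^{*}F^{\bullet},\rho_{p,0}^{*}E^{\bullet})$ is ``acyclic in the relevant degree'' because $\phi^{0,0}$ is a quasi-isomorphism. This is false. Under the vanishing hypothesis that complex computes $\Hom_{\D(U_p)}(\tau_{p,0}^{*}F^{\bullet},\rho_{p,0}^{*}E^{\bullet}[1-p])$, which is typically nonzero: already for $p=1$ the relevant cohomology is the group of degree-$0$ morphisms $\tau^{*}F\to\rho^{*}E$ in the derived category of $U_1$, which contains nontrivial classes precisely \emph{because} $E$ and $F$ are quasi-isomorphic (take $\mathcal{E}=\mathcal{F}$, $\phi=\id$ to see identity-type classes). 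So the existence of a quasi-isomorphism makes this Hom-complex less acyclic, not more, and your solvability claim collapses. What must actually be shown is that the one specific closed obstruction cocycle $\theta$ built from the lower-order data is a \emph{coboundary}; closedness, which you do flag, is not enough. The standard repair --- and the route of \cite[Proposition 2.31]{wei2016twisted} --- is to run the induction on a contracting homotopy of the mapping cone $\mathcal{C}$ of $\phi$ in the pretriangulated dg-category $\Tw_{\Perf}(\mathcal{U},\mathcal{R})$: there $(C^{\bullet},c^{0,1})$ is acyclic, hence contractible by your splitting argument, hence $\Hom^{\bullet}_{\mathcal{R}_p}(\tau_{p,0}^{*}C^{\bullet},\rho_{p,0}^{*}C^{\bullet})$ genuinely is contractible for every $p$ (compose with a pulled-back contraction of $C^{\bullet}$), and every closed obstruction is automatically exact; contractibility of the cone then yields the two-sided homotopy inverse. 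Your closing paragraph states this ``global contractibility of the cone'' principle correctly, but the inductive step you actually wrote down does not implement it and does not close as stated.
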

\begin{proof}
See \cite[Proposition 2.31]{wei2016twisted}.
\end{proof}

\begin{rmk}\label{rmk: examples of good covers}
The following simplicial spaces satisfy the condition in Proposition \ref{prop: weak equivalence between twisted perfect complexes}
\begin{itemize}
\item $X$ is a separated scheme and $\mathcal{U}=\{U_i\}$ is an affine cover of $X$;
\item $X$ is a complex manifold and $\mathcal{U}=\{U_i\}$ is a good cover of $X$ by Stein manifolds, i.e. all finite non-empty intersections of the cover are Stein manifolds. 
\end{itemize}
\end{rmk}

\begin{rmk}
In \cite[Proposition 2.31]{wei2016twisted} it requires that $H^k(U_i,\mathcal{S})=0$ for any $i\geq 0$, any  $k\geq 1$ and any \emph{quasi-coherent} sheaf of $\mathcal{R}_i$-modules $\mathcal{S}$ because it is based on  \cite[Lemma 2.30]{wei2016twisted} which requires the stronger condition. However, by a careful study we can see that the same proof of \cite[Proposition 2.31]{wei2016twisted} works if we only assume that  $H^k(U_i,\mathcal{S})=0$ for any $i\geq 0$, any  $k\geq 1$ and any locally free finitely generated sheaf of $\mathcal{R}_i$-modules $\mathcal{S}$. Nevertheless, the examples in Remark \ref{rmk: examples of good covers} satisfies the stronger condition too.
\end{rmk}

\begin{rmk}
The result in Proposition \ref{prop: weak equivalence between twisted perfect complexes} only applies to twisted perfect complexes because we need to use the fact that quasi-isomorphisms between bounded complexes of finitely generated projective modules have quasi-inverses, which fails for general complexes of modules.
\end{rmk}

\subsection{Simplicial homotopies and twisted perfect complexes}
Let $f$ and $g$ be two simplicial maps between simplicial ringed spaces $(\mathcal{U}, \mathcal{R})$ and  $(\mathcal{V},\mathcal{S})$. Let $h$ be a simplicial homotopy between $f$ and $g$ as in Definition \ref{defi: combinatorial simplicial homotopy between simplicial spaces}. By Lemma \ref{lemma: f star restrict to twisted perfect complexes} we have dg-functors $f^*$, $g^* :\Tw_{\Perf}(\mathcal{V},\mathcal{S})\to \Tw_{\Perf}(\mathcal{U}, \mathcal{R})$.

It is clear that the $A_{\infty}$-natural transformation $\Phi: f^*\Rightarrow g^*$ in Theorem \ref{thm: simplicial homotopies and A infinity nt} also restricts to twisted perfect complexes. Moreover we have the following result.

\begin{prop}
Let $f$ and $g$ be two simplicial maps between simplicial ringed spaces $(\mathcal{U}, \mathcal{R})$ and  $(\mathcal{V},\mathcal{S})$. Let $h$ be a simplicial homotopy between $f$ and $g$ as in Definition \ref{defi: combinatorial simplicial homotopy between simplicial spaces}.  Let $\Phi: f^*\Rightarrow g^*$ be the $A_{\infty}$-natural transformation as  in Theorem \ref{thm: simplicial homotopies and A infinity nt}. In addition assume $(\mathcal{U},\mathcal{R})$ satisfies $H^k(U_i,\mathcal{S})=0$ for any $i\geq 0$, any  $k\geq 1$ and any locally free finitely generated sheaf of $\mathcal{R}_i$-modules $\mathcal{S}$. Then $\Phi$ admits an $A_{\infty}$-quasi-inverse.
\end{prop}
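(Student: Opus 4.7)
The plan is to reduce the existence of an $A_\infty$-quasi-inverse to a pointwise (object-by-object) homotopy invertibility statement, which has already been supplied by the machinery developed in the preceding sections. Concretely, by Proposition \ref{prop: A infinity quasi-inverse and objectwise homotopy inverse}, the $A_\infty$-natural transformation $\Phi: f^* \Rightarrow g^*$ admits an $A_\infty$-quasi-inverse if and only if, for every object $\mathcal{E} \in \Tw_{\Perf}(\mathcal{V},\mathcal{S})$, the degree-$0$ closed morphism $\Phi_0(\mathcal{E}): f^*(\mathcal{E}) \to g^*(\mathcal{E})$ is invertible in the homotopy category $\mathrm{Ho}\,\Tw_{\Perf}(\mathcal{U},\mathcal{R})$. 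So the entire proof reduces to verifying this single hypothesis.

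First I would observe that Lemma \ref{lemma: f star restrict to twisted perfect complexes} guarantees that both $f^*(\mathcal{E})$ and $g^*(\mathcal{E})$ are again twisted perfect complexes whenever $\mathcal{E}$ is, so the question takes place inside $\Tw_{\Perf}(\mathcal{U},\mathcal{R})$. Next, Proposition \ref{prop: homotopic functors induces quasi-isomorphic dg-functors between twisted complexes} (applied to $\mathcal{E}$ viewed as a twisted complex) tells us that $\Phi_0(\mathcal{E})$ is a weak equivalence in the sense of Definition \ref{defi: weak equivalence between twisted complexes}, i.e. it is closed of degree $0$ and its $(0,0)$-component is a quasi-isomorphism of complexes of $\mathcal{R}_0$-modules on $U_0$. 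Note that the construction of $\Phi_0$ used only pullbacks of the Maurer--Cartan datum along the homotopy maps $h_i$, so the weak equivalence statement survives verbatim in the perfect setting.

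The final step is where the cohomological vanishing hypothesis enters: Proposition \ref{prop: weak equivalence between twisted perfect complexes} states precisely that under the assumption $H^k(U_i, \mathcal{S}) = 0$ for all $i \geq 0$, $k \geq 1$ and every locally free finitely generated sheaf of $\mathcal{R}_i$-modules $\mathcal{S}$, a weak equivalence in $\Tw_{\Perf}(\mathcal{U},\mathcal{R})$ is invertible in $\mathrm{Ho}\,\Tw_{\Perf}(\mathcal{U},\mathcal{R})$. Applying this to $\Phi_0(\mathcal{E})$ yields the required homotopy invertibility for each $\mathcal{E}$, and then Proposition \ref{prop: A infinity quasi-inverse and objectwise homotopy inverse} produces the $A_\infty$-quasi-inverse.

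The proof is therefore a three-line concatenation of previous results rather than a calculation. There is no real obstacle; the only thing worth flagging is that one must check that the perfectness assumption on $\mathcal{E}$ is what is actually needed to invoke Proposition \ref{prop: weak equivalence between twisted perfect complexes}, since, as pointed out in the remark following that proposition, the analogous statement for arbitrary twisted complexes fails (quasi-isomorphisms of unbounded complexes of modules need not admit quasi-inverses). This is precisely why the refinement of Theorem \ref{thm: simplicial homotopies and A infinity nt} can only be made for $\Tw_{\Perf}$ and not for $\Tw$.
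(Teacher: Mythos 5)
Your proof is correct and follows exactly the same route as the paper: Proposition \ref{prop: homotopic functors induces quasi-isomorphic dg-functors between twisted complexes} gives that $\Phi_0(\mathcal{E})$ is a weak equivalence, Proposition \ref{prop: weak equivalence between twisted perfect complexes} upgrades this to invertibility in $\mathrm{Ho}\,\Tw_{\Perf}(\mathcal{U},\mathcal{R})$ under the cohomological vanishing hypothesis, and Proposition \ref{prop: A infinity quasi-inverse and objectwise homotopy inverse} then yields the $A_\infty$-quasi-inverse. Your added remark about why perfectness is essential is accurate and consistent with the paper's own discussion.
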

\begin{proof}
By Proposition \ref{prop: homotopic functors induces quasi-isomorphic dg-functors between twisted complexes} $\Phi_0(\mathcal{E}): f^*(\mathcal{E})\to g^*(\mathcal{E})$ is a weak equivalence for each $\mathcal{E}$. By Proposition \ref{prop: weak equivalence between twisted perfect complexes} $\Phi_0(\mathcal{E})$ is invertible in the homotopy category for $\mathcal{E}\in \Tw_{\Perf}(\mathcal{V},\mathcal{S})$. The claim then follows from Proposition \ref{prop: A infinity quasi-inverse and objectwise homotopy inverse}.
\end{proof}

\begin{rmk}
Although the $A_{\infty}$-natural transformation $\Phi$ consists only two components $\Phi_0$ and $\Phi_1$, its  $A_{\infty}$-quasi-inverse may contain higher components.
\end{rmk}
\bibliography{homotopylimitbib}{}

\bibliographystyle{alpha}

\end{document}